\newtheorem{theorem}{Theorem}
\newtheorem{corollary}[theorem]{Corollary}
\newtheorem{proposition}[theorem]{Proposition}
\newtheorem{remark}[theorem]{Remark}
\newenvironment{proof}[1][Proof]{\noindent\textbf{#1.} }{\ \rule{0.5em}{0.5em}}
\begin{document}

\title{Parallelism between locally conformal symplectic manifolds and contact manifolds}
\author{Eug\`{e}ne Okassa\\Universit\'{e} Marien Ngouabi\\Facult\'{e} des Sciences et Techniques\\D\'{e}partement de Math\'{e}matiques\\B.P.69 - Brazzaville - (Congo)\\e-mail:eugeneokassa@yahoo.fr}
\date{}
\maketitle

\begin{abstract}
We give the parallelism between locally conformal symplectic manifolds and
contact manifolds. We also give the generalization of exact contact manifolds.

\end{abstract}

\bigskip

\textbf{Keywords:} Lie-Rinehart algebras, differential operators, Jacobi
manifolds, symplectic manifolds, contact manifolds, nonexact contact manifolds.

\textbf{Mathematical Subject Classification (2010):} 13N05, 53D05, 53D10.

\section{Introduction}

Let $A$ be a commutative algebra with unit $1_{A}$ over a commutative field
$K$ with characteristic $zero$, and Diff$_{K}(A)$, the Lie algebra of
differential operators of order $\leq1$ on $A$.

We recall that a Lie-Rinehart algebra is a pair $(\mathcal{G},\rho)$ where
$\mathcal{G}$ is simultaneously an $A$-module and a $K$-Lie algebra,  which
Lie algebra bracket $\left[  ,\right]  $, and
\[
\rho:\mathcal{G}\longrightarrow\text{Diff}_{K}(A)
\]
is simultaneously a morphism of $A$-modules and $K$-Lie algebras satisfying
\[
\left[  x,a\cdot y\right]  =\left[  \rho(x)(a)-a\cdot\rho(x)(1_{A})\right]
\cdot y+a\cdot\left[  x,y\right]
\]
for any $a\in A$ and $x$ , $y$ $\in$ $\mathcal{G}$ $\cite{oka1}$.

Let $(\mathcal{G},\rho)$ be a Lie-Rinehart algebra and
\[
\mathfrak{L}_{sks}(\mathcal{G},A)=\bigoplus\limits_{p\in\mathbb{N}%
}\mathfrak{L}_{sks}^{p}(\mathcal{G},A)
\]
where $\mathfrak{L}_{sks}^{p}(\mathcal{G},A)$ is the module of skew-symmetric
$A$-multilinear maps of degree $p$ from $\mathcal{G}$ to $A$ and finally
\[
d_{\rho}:\mathfrak{L}_{sks}(\mathcal{G},A)\longrightarrow\mathfrak{L}%
_{sks}(\mathcal{G},A)
\]
the cohomology operator associated with the representation $\rho$.

We recall that the pair $(\mathfrak{L}_{sks}(\mathcal{G},A),d_{\rho})$ is a
differential algebra \cite{oka1}.

For any $x\in\mathcal{G}$, the map
\[
i_{x}:\mathfrak{L}_{sks}(\mathcal{G},A)\longrightarrow\mathfrak{L}%
_{sks}(\mathcal{G},A)
\]
defined by
\[
(i_{x}f)(x_{1},x_{2},...,x_{p-1})=f(x,x_{1},x_{2},...,x_{p-1})\text{,}%
\]
for $x_{1},x_{2},...,x_{p-1}$ elements of $\mathcal{G}$ and for any
$f\in\mathfrak{L}_{sks}^{p}(\mathcal{G},A)$, is a derivation of degree $-1$
\cite{god}. The map
\[
\theta_{x}=\left[  i_{x},d_{\rho}\right]  =i_{x}\circ d_{\rho}+d_{\rho}\circ
i_{x}:\mathfrak{L}_{sks}(\mathcal{G},A)\longrightarrow\mathfrak{L}%
_{sks}(\mathcal{G},A)
\]
is a differential operator of order $\leq1$ and of degree $zero$ satisfying,
for any $y$ $\in$ $\mathcal{G}$, $a\in A$,%

\begin{align*}
\left[  \theta_{x},i_{y}\right]   &  =i_{\left[  x,y\right]  }\text{;}\\
\text{ }\theta_{x}\circ d_{\rho}  &  =d_{\rho}\circ\theta_{x}\text{;}\\
\left[  \theta_{x},\theta_{y}\right]   &  =\theta_{\left[  x,y\right]
}\text{;}\\
\text{ }\theta_{x}a  &  =\left[  \rho(x)\right]  (a)\text{.}%
\end{align*}
For any $x\in\mathcal{G}$, the bracket that defines $\theta_{x}$ is the graded commutator.

A Lie-Rinehart-Jacobi algebra structure on a Lie-Rinehart algebra
$(\mathcal{G},\rho)$ is defined by a skew-symmetric bilinear form
\[
\mu:\mathcal{G}\times\mathcal{G}\longrightarrow A
\]
such that
\[
d_{\rho}\mu=0\text{.}%
\]

The triplet $(\mathcal{G},\rho,\mu)$ is a Lie-Rinehart-Jacobi algebra
\cite{oka1}. A Lie-Rinehart-Jacobi algebra $(\mathcal{G},\rho,\mu)$ is a
Lie-Rinehart-Poisson algebra if $\rho(x)(1_{A})=0$ for any $x\in\mathcal{G}$
\cite{oka1}.

A Lie-Rinehart-Jacobi algebra (a Lie-Rinehart-Poisson algebra respectively),
$(\mathcal{G},\rho,\mu)$, is said to be a symplectic Lie-Rinehart-Jacobi
algebra (a symplectic Lie-Rinehart-Poisson algebra respectively) if the
skew-symmetric bilinear form $\mu$ is nondegenerate \cite{oka1} i.e. the
induced map
\[
\mathcal{G}\longrightarrow\mathcal{G}^{\ast},x\longmapsto i_{x}\mu,
\]
is an isomorphism of $A$-modules where $\mathcal{G}^{\ast}$ is the $A$-module
of linear forms on $\mathcal{G}$.

We recall that if a triplet $(\mathcal{G},\rho,\mu)$ is a symplectic
Lie-Rinehart-Jacobi algebra (a symplectic Lie-Rinehart-Poisson algebra
respectively), then $A$ is a Jacobi algebra ($A$ is a Poisson algebra
respectively) \cite{oka1}.

The parallelism between symplectic manifolds and (exact) contact manifolds is
given in \cite{oka3}.

The main goal of this paper is to show the parallelism between locally
conformal symplectic manifolds and contact manifolds. We also will give the
generalization of exact contact manifolds.

In what follows, $M$ denotes a paracompact and connected smooth manifold,
$C^{\infty}(M)$ the algebra of numerical functions of classe $C^{\infty}$ on
$M$, $\mathfrak{X}(M)$ the $C^{\infty}(M)$-module of vector fields on $M$, $1$
the unit of $C^{\infty}(M)$, $\mathcal{D}(M)$ the $C^{\infty}(M)$-module of
differential operators of order $\leq1$ on $C^{\infty}(M)$ and $\delta$ the
cohomology operator associated with the identically map
\[
id:\mathcal{D}(M)\longrightarrow\mathcal{D}(M)\text{.}%
\]

The term "differential operator" will mean "differential operator of order
$\leq1$".

\section{Symplectic Lie-Rinehart-Jacobi algebra structure on $\mathfrak{X}%
(M)$}

A locally conformal symplectic structure on $M$ is a pair $(\omega,\alpha)$
made up by a closed $1$-form
\[
\alpha:\mathfrak{X}(M)\longrightarrow C^{\infty}(M)
\]
and a nondegerate skew-symmetric $2$-form
\[
\omega:\mathfrak{X}(M)\times\mathfrak{X}(M)\longrightarrow C^{\infty}(M)
\]
such that
\[
d\omega=-\alpha\Lambda\omega
\]
where $d$ is the exterior differentiation operator.

When $\alpha=0$, then $M$ is a symplectic manifold.

\bigskip

\begin{proposition}
\cite{oka2}A smooth manifold $M$ is a locally conformal symplectic manifold
($M$ is a symplectic manifold respectively) if and only if $\mathfrak{X}(M)$
admits a symplectic Lie-Rinehart-Jacobi algebra structure ($\mathfrak{X}(M)$
admits a symplectic Lie-Rinehart-Poisson algebra structure respectively).
\end{proposition}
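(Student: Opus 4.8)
The plan is to exhibit an explicit dictionary between the pair $(\omega,\alpha)$ and the pair $(\rho,\mu)$, taking $\mu=\omega$ and encoding $\alpha$ as the zeroth-order part of the representation, namely $\alpha(X)=\rho(X)(1)$. The guiding observation is that a differential operator of order $\leq 1$ on $C^{\infty}(M)$ splits canonically as a derivation plus a multiplication operator, so that any candidate $\rho$ has the form $\rho(X)=\phi(X)+\alpha(X)$ with $\phi(X)\in\mathfrak{X}(M)$ and $\alpha(X)\in C^{\infty}(M)$; the whole proof amounts to showing that the Lie-Rinehart axioms force $\phi=\mathrm{id}$ and turn the cocycle condition $d_{\rho}\mu=0$ into the locally conformal symplectic condition.

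For the direct implication, starting from a locally conformal symplectic structure $(\omega,\alpha)$ I would set $\rho(X)(f)=X(f)+f\,\alpha(X)$ and $\mu=\omega$. One checks immediately that each $\rho(X)$ is a differential operator of order $\leq 1$ with $\rho(X)(1)=\alpha(X)$, and that $\rho$ is $C^{\infty}(M)$-linear because $\alpha$ is a $1$-form. Computing the commutator $[\rho(X),\rho(Y)]$ as operators, the second-order and first-order terms collapse to $[X,Y]$ plus the zeroth-order function $X(\alpha(Y))-Y(\alpha(X))$; comparing with $\rho([X,Y])=[X,Y]+\alpha([X,Y])$ shows that $\rho$ is a morphism of Lie algebras exactly when $X(\alpha(Y))-Y(\alpha(X))-\alpha([X,Y])=0$, i.e. precisely when $d\alpha=0$. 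Thus the closedness of $\alpha$ is what makes $(\mathfrak{X}(M),\rho)$ a Lie-Rinehart algebra. Finally, a direct expansion shows that on a $2$-form the operator $d_{\rho}$ equals the Lichnerowicz-type differential $d_{\alpha}=d+\alpha\wedge$, because each zeroth-order term $f\,\alpha(X)$ of $\rho$ contributes exactly the missing $\alpha\wedge\omega$ summand; hence $d_{\rho}\mu=d\omega+\alpha\wedge\omega=0$ by the defining relation $d\omega=-\alpha\wedge\omega$, and $\mu=\omega$ is nondegenerate by hypothesis. This yields a symplectic Lie-Rinehart-Jacobi algebra, which is Poisson precisely when $\alpha=\rho(\cdot)(1)$ vanishes, i.e. in the symplectic case.

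For the converse, given a symplectic Lie-Rinehart-Jacobi structure $(\mathfrak{X}(M),\rho,\mu)$ I would define $\alpha(X)=\rho(X)(1)$, which is a $1$-form since $\rho$ is $A$-linear, and set $\omega=\mu$. The crucial step is to recover $\phi=\mathrm{id}$: feeding $x=X$, $y=Y$, $a=f$ into the Lie-Rinehart compatibility identity $[x,a\cdot y]=[\rho(x)(a)-a\cdot\rho(x)(1)]\cdot y+a\cdot[x,y]$ with the standard bracket of vector fields $[X,fY]=f[X,Y]+X(f)Y$ forces $\rho(X)(f)-f\,\alpha(X)=X(f)$, so that $\rho(X)(f)=X(f)+f\,\alpha(X)$ automatically. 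With this normal form in hand, the Lie-morphism property of $\rho$ reads $\alpha([X,Y])=X(\alpha(Y))-Y(\alpha(X))$, that is $d\alpha=0$, and the cocycle condition $d_{\rho}\mu=0$ becomes $d\omega+\alpha\wedge\omega=0$, i.e. $d\omega=-\alpha\wedge\omega$; nondegeneracy of $\omega$ is inherited from $\mu$. Hence $(\omega,\alpha)$ is a locally conformal symplectic structure, symplectic exactly in the Poisson case $\alpha=0$.

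I expect the main obstacle to be the converse normalization just described: the representation $\rho$ is a priori an arbitrary $A$-linear Lie morphism into $\mathcal{D}(M)$, and one must rule out a nontrivial derivation part $\phi\neq\mathrm{id}$. The resolution is to use the Lie-Rinehart compatibility axiom rather than the Lie-morphism property alone, since it is this axiom that rigidly couples $\rho$ to the fixed bracket on $\mathfrak{X}(M)$. The only other point requiring care is the bookkeeping identity $d_{\rho}=d_{\alpha}$ on $2$-forms, where one must verify that the graded commutator $\theta_{x}=[i_{x},d_{\rho}]$ and the order-$\leq 1$ nature of $\rho$ reproduce exactly the $\alpha\wedge$ correction and no extra terms.
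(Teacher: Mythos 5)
Your proposal is correct, but note that the paper itself gives no proof of this proposition: it is quoted from the reference \cite{oka2}, so there is no internal argument to compare against line by line. The closest internal comparison is with the paper's own treatment of the parallel statement for $\mathcal{D}(M)$ in Section~3, and your argument mirrors that strategy exactly: your normalization step (forcing $\rho(X)(f)=X(f)+f\,\alpha(X)$ from the Lie--Rinehart compatibility axiom) is the $\mathfrak{X}(M)$-analogue of Theorem~3; your observation that the Lie-morphism property of $\rho$ is equivalent to $d\alpha=0$ is the analogue of Propositions~2 and~4 (for $\mathcal{D}(M)$ the condition reads $\delta\alpha=(\delta 1)\Lambda\alpha$, which Proposition~4 of the paper shows reduces to $\alpha(1)\in\mathbb{R}$ and closedness of $\alpha|_{\mathfrak{X}(M)}$); and your identification $d_{\rho}=d+\alpha\wedge\cdot$ on forms is the analogue of Proposition~5 ($\delta_{\alpha}\eta=\delta\eta+\alpha\Lambda\eta$). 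One adaptation you make is genuinely necessary and handled correctly: in the paper's normalization for $\mathcal{D}(M)$ the compatibility axiom is evaluated at the distinguished element $y=1\in\mathcal{D}(M)$, whereas $\mathfrak{X}(M)$ has no such element, so you instead run the axiom against an arbitrary $Y$ and conclude from $\left[\rho(X)(f)-f\alpha(X)-X(f)\right]\cdot Y=0$ for all $Y$ that the coefficient function vanishes; it would be worth one sentence to justify this last inference (at each point choose a vector field nonvanishing there). With that small remark added, your proof is a complete and self-contained substitute for the citation.
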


\bigskip

\section{Symplectic Lie-Rinehart-Jacobi algebra structure on $\mathcal{D}(M)$}

\bigskip

When $\varphi\in\mathcal{D}(M)$, $f$,$g\in C^{\infty}(M)$, we recall that
\begin{align*}
\left[  \varphi,f\right]   &  =\varphi(f)-f\cdot\varphi(1),\\
\left[  f,g\right]   &  =0\text{.}%
\end{align*}

\subsection{Lie-Rinehart algebra structure on $\mathcal{D}(M)$}

\bigskip

For any linear form%
\[
\alpha:\mathcal{D}(M)\longrightarrow C^{\infty}(M)\text{,}%
\]
we verify that the map%
\[
\rho_{\alpha}:\mathcal{D}(M)\longrightarrow\mathcal{D}(M),\varphi
\longmapsto\varphi+\alpha(\varphi),
\]
is $C^{\infty}(M)$-linear.

\begin{proposition}
For any linear form
\[
\alpha:\mathcal{D}(M)\longrightarrow C^{\infty}(M),
\]
then the map
\[
\rho_{\alpha}:\mathcal{D}(M)\longrightarrow\mathcal{D}(M),\text{ }%
\varphi\longmapsto\rho_{\alpha}(\varphi),
\]
is a morphism of Lie algebras if and only if
\[
\delta\alpha=(\delta1)\Lambda\alpha\text{.}%
\]

\end{proposition}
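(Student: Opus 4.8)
The plan is to reduce the statement to a direct bracket computation. Since the $C^{\infty}(M)$-linearity of $\rho_{\alpha}$ is already established, the map $\rho_{\alpha}$ is a morphism of Lie algebras exactly when it preserves brackets, that is, when
\[
\rho_{\alpha}([\varphi,\psi]) = [\rho_{\alpha}(\varphi),\rho_{\alpha}(\psi)]
\]
for all $\varphi,\psi\in\mathcal{D}(M)$, the bracket on the right being the commutator in $\mathcal{D}(M)=\mathrm{Diff}_{K}(C^{\infty}(M))$.

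First I would expand both sides. The left-hand side is at once $[\varphi,\psi]+\alpha([\varphi,\psi])$, the second summand being read as the associated zeroth-order (multiplication) operator. For the right-hand side I would use bilinearity of the commutator to get
\[
[\varphi+\alpha(\varphi),\psi+\alpha(\psi)] = [\varphi,\psi]+[\varphi,\alpha(\psi)]+[\alpha(\varphi),\psi]+[\alpha(\varphi),\alpha(\psi)],
\]
and then simplify each cross term with the two bracket rules recalled at the start of this section: $[\alpha(\varphi),\alpha(\psi)]=0$ since both factors are functions, while $[\varphi,\alpha(\psi)]=\varphi(\alpha(\psi))-\alpha(\psi)\cdot\varphi(1)$ and, by skew-symmetry, $[\alpha(\varphi),\psi]=-\psi(\alpha(\varphi))+\alpha(\varphi)\cdot\psi(1)$.

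Subtracting the common term $[\varphi,\psi]$ and using that a function is determined by its multiplication operator (so that equality of zeroth-order operators is equality of functions), the morphism condition collapses to the single functional identity
\[
\alpha([\varphi,\psi]) = \varphi(\alpha(\psi))-\psi(\alpha(\varphi))-\varphi(1)\cdot\alpha(\psi)+\psi(1)\cdot\alpha(\varphi).
\]
The last step is to read the two sides through the cohomology operator $\delta$ attached to the identity representation. In degree zero one has $(\delta 1)(\varphi)=\varphi(1)$, in degree one $(\delta\alpha)(\varphi,\psi)=\varphi(\alpha(\psi))-\psi(\alpha(\varphi))-\alpha([\varphi,\psi])$, and the wedge of the two $1$-forms $\delta 1$ and $\alpha$ is $[(\delta 1)\Lambda\alpha](\varphi,\psi)=\varphi(1)\cdot\alpha(\psi)-\psi(1)\cdot\alpha(\varphi)$. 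Substituting these, the displayed identity is exactly $(\delta\alpha)(\varphi,\psi)=[(\delta 1)\Lambda\alpha](\varphi,\psi)$ for all $\varphi,\psi$, i.e. $\delta\alpha=(\delta 1)\Lambda\alpha$; since every step is an equivalence, this proves both implications simultaneously.

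I expect no real obstacle here, as the argument is essentially bookkeeping. The points that demand care are the consistent reading of $\alpha(\varphi)$ as a zeroth-order element of $\mathcal{D}(M)$ and the correct sign in the rule $[\varphi,f]=\varphi(f)-f\cdot\varphi(1)$ when the function occupies the first slot, together with the verification that the degree-one formula for $\delta$ is the one genuinely induced by the identity representation, so that the $\rho(x)(1_{A})$ corrections built into the Lie-Rinehart bracket are already carried by the commutator $[\varphi,\psi]$ appearing inside $(\delta\alpha)(\varphi,\psi)$.
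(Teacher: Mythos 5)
Your proof is correct and is essentially the paper's own argument made explicit: the paper simply asserts the identity $\left[\rho_{\alpha}(\varphi),\rho_{\alpha}(\psi)\right]-\rho_{\alpha}\left[\varphi,\psi\right]=\left[\delta\alpha-(\delta1)\Lambda\alpha\right](\varphi,\psi)$ and says "we verify," while you carry out exactly that verification, with the correct bracket rules, the correct formula $(\delta\alpha)(\varphi,\psi)=\varphi(\alpha(\psi))-\psi(\alpha(\varphi))-\alpha([\varphi,\psi])$, and the correct wedge convention $[(\delta1)\Lambda\alpha](\varphi,\psi)=\varphi(1)\cdot\alpha(\psi)-\psi(1)\cdot\alpha(\varphi)$. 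No gaps; the equivalence of the two conditions follows just as you argue, since the difference of the two sides is a multiplication operator that vanishes identically iff $\delta\alpha=(\delta1)\Lambda\alpha$.
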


\begin{proof}
For any $\varphi$, $\psi$ $\in\mathcal{D}(M)$, we verify that
\[
\left[  \rho_{\alpha}(\varphi),\rho_{\alpha}(\psi)\right]  -\rho_{\alpha
}\left[  \varphi,\psi\right]  =\left[  \delta\alpha-(\delta1)\Lambda
\alpha\right]  (\varphi,\psi)\text{.}%
\]
And that ends the proof.
\end{proof}

\begin{theorem}
If $M$ is a smooth manifold, then a Lie-Rinehart algebra structure on
$\mathcal{D}(M)$ is always of the form $(\mathcal{D}(M),\rho_{\alpha})$ where%
\[
\alpha:\mathcal{D}(M)\longrightarrow C^{\infty}(M)
\]
is a linear form such that
\[
\delta\alpha=(\delta1)\Lambda\alpha\text{.}%
\]

\end{theorem}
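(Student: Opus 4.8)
The plan is to begin with an \emph{arbitrary} Lie-Rinehart algebra structure on $\mathcal{D}(M)$, that is, the canonical commutator bracket on $\mathcal{D}(M)$ together with some anchor $\rho:\mathcal{D}(M)\longrightarrow\mathcal{D}(M)$ satisfying the $A$-module morphism axiom, the Lie morphism axiom, and the Leibniz identity, and to show that the anchor is forced to have the form $\rho_{\alpha}$. The decisive tool is the Leibniz identity, which for $\varphi,\psi\in\mathcal{D}(M)$ and $f\in C^{\infty}(M)$ reads $[\varphi,f\cdot\psi]=[\rho(\varphi)(f)-f\cdot\rho(\varphi)(1)]\cdot\psi+f\cdot[\varphi,\psi]$.

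First I would compute the left-hand combination intrinsically from the commutator. Writing $f\cdot\psi$ as the composite $m_{f}\circ\psi$ of multiplication by $f$ with $\psi$, associativity of composition gives the purely algebraic identity $[\varphi,f\cdot\psi]-f\cdot[\varphi,\psi]=[\varphi,m_{f}]\circ\psi$, and a short check shows that $[\varphi,m_{f}]$ is the operator of multiplication by $\varphi(f)-f\cdot\varphi(1)$, i.e. by the bracket $[\varphi,f]$ recalled at the start of the section. Hence $[\varphi,f\cdot\psi]-f\cdot[\varphi,\psi]=[\varphi,f]\cdot\psi$.

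Comparing this with the Leibniz identity, and choosing $\psi$ to be the identity operator so that the common factor $\psi$ may be cancelled, yields $\rho(\varphi)(f)-f\cdot\rho(\varphi)(1)=\varphi(f)-f\cdot\varphi(1)$ for every $f$. Rearranged, this says $(\rho(\varphi)-\varphi)(f)=f\cdot(\rho(\varphi)-\varphi)(1)$ for all $f$, so the operator $\rho(\varphi)-\varphi$ is of order zero, namely multiplication by the function $\alpha(\varphi):=\rho(\varphi)(1)-\varphi(1)$. Thus $\rho(\varphi)=\varphi+\alpha(\varphi)=\rho_{\alpha}(\varphi)$. Since both $\rho$ and the evaluation $\varphi\longmapsto\varphi(1)$ are $C^{\infty}(M)$-linear, $\alpha$ is a linear form on $\mathcal{D}(M)$.

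It then remains only to invoke the preceding results: because $\rho=\rho_{\alpha}$ is, by hypothesis, a morphism of Lie algebras, the previous Proposition immediately gives the cohomological condition $\delta\alpha=(\delta1)\Lambda\alpha$, completing the proof. The step I expect to demand the most care is the first computation: one must verify the commutator identity $[\varphi,f\cdot\psi]-f\cdot[\varphi,\psi]=[\varphi,f]\cdot\psi$ at the level of order-$\leq1$ operators and confirm that $[\varphi,m_{f}]$ is genuinely of order zero, since this is precisely what rigidifies the symbol (first-order part) of $\rho(\varphi)$ and leaves the zeroth-order term $\alpha(\varphi)$ as the sole remaining degree of freedom.
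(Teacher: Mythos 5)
Your proposal is correct and follows essentially the same route as the paper: you use the Leibniz axiom, specialized at the unit (the identity operator $1$), to force $\rho(\varphi)(f)-f\cdot\rho(\varphi)(1)=\varphi(f)-f\cdot\varphi(1)$, conclude that $\rho(\varphi)-\varphi$ is multiplication by $\alpha(\varphi)=\left[\rho(\varphi)-\varphi\right](1)$ so that $\rho=\rho_{\alpha}$, and then invoke the preceding proposition to extract $\delta\alpha=(\delta1)\Lambda\alpha$ from the Lie-morphism requirement. Your explicit verification of the commutator identity $\left[\varphi,f\cdot\psi\right]-f\cdot\left[\varphi,\psi\right]=\left[\varphi,f\right]\cdot\psi$ only makes precise what the paper uses implicitly through the recalled bracket $\left[\varphi,f\right]=\varphi(f)-f\cdot\varphi(1)$ on $C^{\infty}(M)\subset\mathcal{D}(M)$.
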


\begin{proof}
The previous proposition implies the sufficient condition. For the necessary
condition, let be given a Lie-Rinehart algebra stucture $(\mathcal{D}%
(M),\rho)$ on $\mathcal{D}(M)$. For any $\varphi\in\mathcal{D}(M)$, for any
$f\in C^{\infty}(M)$, we get
\[
\left[  \varphi,f\right]  =\left[  \rho(\varphi)\right]  (f)-f\cdot\left[
\rho(\varphi)\right]  (1)\text{.}%
\]
On the other hand , we get
\[
\left[  \varphi,f\right]  =\varphi(f)-f\cdot\varphi(1)\text{.}%
\]
We deduce that
\[
\left[  \rho(\varphi)\right]  (f)-f\cdot\left[  \rho(\varphi)\right]
(1)=\varphi(f)-f\cdot\varphi(1)\text{.}%
\]
Therefore
\[
\left[  \rho(\varphi)\right]  (f)=\varphi(f)+f\cdot(\left[  \rho
(\varphi)\right]  (1)-\varphi(1))\text{.}%
\]
The map
\[
\alpha:\mathcal{D}(M)\longrightarrow C^{\infty}(M),\varphi\longmapsto\left[
\rho(\varphi)-\varphi\right]  (1),
\]
is a $C^{\infty}(M)$-linear. Thus
\[
\left[  \rho(\varphi)\right]  (f)=\varphi(f)+f\cdot\alpha(\varphi)\text{.}%
\]
We have
\[
\rho(\varphi)=\varphi+\alpha(\varphi).
\]
We finally conclude that $\rho=\rho_{\alpha}$. As $\rho$ has to be a Lie
algebras morphism, we deduce that the linear form $\alpha$ is such that
$\delta\alpha=(\delta1)\Lambda\alpha$.
\end{proof}

\bigskip

\begin{proposition}
A linear form%
\[
\alpha:\mathcal{D}(M)\longrightarrow C^{\infty}(M)
\]
satisfies%
\[
\delta\alpha=(\delta1)\Lambda\alpha
\]
if and only if%
\[
\alpha(1)\in%
\mathbb{R}
\text{ and }d(\alpha_{\left\vert \mathfrak{X}(M)\right.  })=0\text{.}%
\]

\end{proposition}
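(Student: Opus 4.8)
The plan is to exploit the direct-sum decomposition $\mathcal{D}(M)=\mathfrak{X}(M)\oplus C^{\infty}(M)\cdot 1$, in which every differential operator $\varphi$ splits as a vector field $X_{\varphi}$ plus the multiplication operator $\varphi(1)\cdot 1$. Both $\delta\alpha$ and $(\delta 1)\Lambda\alpha$ belong to $\mathfrak{L}_{sks}^{2}(\mathcal{D}(M),C^{\infty}(M))$, hence are skew-symmetric $C^{\infty}(M)$-bilinear, so the identity $\delta\alpha=(\delta 1)\Lambda\alpha$ holds if and only if the $2$-form $B:=\delta\alpha-(\delta 1)\Lambda\alpha$ vanishes on every pair drawn from the generators $\mathfrak{X}(M)$ and $1$. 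By bilinearity and skew-symmetry this reduces to two families of tests: pairs $(X,Y)$ of vector fields, and pairs $(X,1)$ with $X\in\mathfrak{X}(M)$ (the pair $(1,1)$ being automatically annihilated). I would write $\beta:=\alpha_{\left\vert\mathfrak{X}(M)\right.}$ and $c:=\alpha(1)$, noting that $\alpha$ is entirely recovered from $(\beta,c)$ through $\alpha(\varphi)=\beta(X_{\varphi})+\varphi(1)\cdot c$.

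Next I would record the two evaluations using $(\delta\alpha)(\varphi,\psi)=\varphi(\alpha(\psi))-\psi(\alpha(\varphi))-\alpha([\varphi,\psi])$ and $(\delta 1)(\varphi)=\varphi(1)$. On a pair of vector fields one has $X(1)=Y(1)=0$, so $(\delta 1)\Lambda\alpha$ vanishes while $(\delta\alpha)(X,Y)=X(\beta(Y))-Y(\beta(X))-\beta([X,Y])=(d\beta)(X,Y)$; thus the first family of tests is equivalent to $d\beta=0$. On a pair $(X,1)$, using $[X,1]=0$ and that $1$ acts as the identity operator, I obtain $(\delta\alpha)(X,1)=X(c)-\beta(X)$ and $((\delta 1)\Lambda\alpha)(X,1)=-\beta(X)$, whence $B(X,1)=X(c)$; so the second family of tests is equivalent to $X(c)=0$ for every $X\in\mathfrak{X}(M)$.

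Finally I would assemble the equivalence: $\delta\alpha=(\delta 1)\Lambda\alpha$ holds if and only if $d\beta=0$ and $X(c)=0$ for all vector fields $X$. Since $M$ is connected, the condition $X(c)=0$ for every $X$ forces $dc=0$ and therefore $c$ is a real constant, i.e. $\alpha(1)\in\mathbb{R}$; conversely $\alpha(1)\in\mathbb{R}$ returns $X(c)=0$. This gives exactly $\alpha(1)\in\mathbb{R}$ and $d(\alpha_{\left\vert\mathfrak{X}(M)\right.})=0$, as claimed.

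I do not anticipate a serious obstacle: once the reduction to generators is in place (legitimate because $\delta\alpha$ and $(\delta 1)\Lambda\alpha$ are guaranteed to be $C^{\infty}(M)$-bilinear cochains), the statement is a direct verification. The one point demanding care is the mixed evaluation $(X,1)$, where the zeroth-order part of the operators enters and where one must remember that $1$ is the identity operator, so that the derivation term $X(c)$ survives and is matched against the wedge term $-\beta(X)$; here the Lie-Rinehart axiom $[\varphi,h\psi]=(\varphi(h)-h\varphi(1))\psi+h[\varphi,\psi]$ underlies the bilinearity used throughout. The passage from $X(c)=0$ for all $X$ to $\alpha(1)\in\mathbb{R}$ is the only step where the connectedness of $M$ is genuinely invoked.
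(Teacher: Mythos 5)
Your proposal is correct and is essentially the paper's own argument: the paper likewise expands $\varphi=\varphi(1)+X$, $\psi=\psi(1)+Y$ and obtains the single formula $\left[\delta\alpha-(\delta1)\Lambda\alpha\right](\varphi,\psi)=\psi(1)\cdot X\left[\alpha(1)\right]-\varphi(1)\cdot Y\left[\alpha(1)\right]+\left[d(\alpha_{\left\vert\mathfrak{X}(M)\right.})\right](X,Y)$, which is exactly your two generator evaluations $B(X,Y)=(d\beta)(X,Y)$ and $B(X,1)=X(c)$ assembled by bilinearity. The paper then also tests against $(X,1)$ and invokes connectedness to get $\alpha(1)\in\mathbb{R}$, precisely as you do.
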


\begin{proof}
For any $\varphi=\varphi(1)+X$, $\psi=\psi(1)+Y$ two elements of
$\mathcal{D}(M)$ with $X$, $Y\in\mathfrak{X}(M)$, we have%
\begin{align*}
&  \left[  \delta\alpha-(\delta1)\Lambda\alpha\right]  (\varphi,\psi)\\
&  =\psi(1)\cdot X\left[  \alpha(1)\right]  -\varphi(1)\cdot Y\left[
\alpha(1)\right]  +\left[  d(\alpha_{\left\vert \mathfrak{X}(M)\right.
})\right]  (X,Y)\text{.}%
\end{align*}

$/\Longrightarrow$ As $\delta\alpha-(\delta1)\Lambda\alpha=0$, we have
\begin{align*}
0  &  =\left[  \delta\alpha-(\delta1)\Lambda\alpha\right]  (X,1)\\
&  =X\left[  \alpha(1)\right] \\
&  =(d\left[  \alpha(1)\right]  )(X)\text{.}%
\end{align*}

As $X$ is arbitrary, we deduce that $d\left[  \alpha(1)\right]  =0$. Thus
$\alpha(1)\in%
\mathbb{R}
$ since $M$ is connected.

We also have $d(\alpha_{\left\vert \mathfrak{X}(M)\right.  })=0$.

$\Longleftarrow/$ If $\alpha(1)\in%
\mathbb{R}
$ and $d(\alpha_{\left\vert \mathfrak{X}(M)\right.  })=0$, we immediately have
$\delta\alpha=(\delta1)\Lambda\alpha$.
\end{proof}

\bigskip

\subsection{Symplectic Lie-Rinehart-Jacobi algebra structure on $\mathcal{D}%
(M)$}

Let $(\mathcal{D}(M),\rho_{\alpha})$ be a Lie-Rinehart algebra structure on
$\mathcal{D}(M)$. The linear form
\[
\alpha:\mathcal{D}(M)\longrightarrow C^{\infty}(M)
\]
is such that $\delta\alpha=(\delta1)\Lambda\alpha$. In this case, we denote
$\delta_{\alpha}$ the cohomology operator associated with the representation
$\rho_{\alpha}$.

\begin{proposition}
For any $\eta\in\mathfrak{L}_{sks}(\mathcal{D}(M),C^{\infty}(M))$, then%
\[
\delta_{\alpha}\eta=\delta\eta+\alpha\Lambda\eta\text{.}%
\]

\end{proposition}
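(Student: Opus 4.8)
The plan is to prove the identity by comparing the two cohomology operators through their behaviour under contraction, rather than by expanding the global Koszul formula. The starting observation is that the two Lie-Rinehart structures $(\mathcal{D}(M),\mathrm{id})$ and $(\mathcal{D}(M),\rho_{\alpha})$ carry the very same $C^{\infty}(M)$-module structure and the very same Lie bracket: indeed, as already noted in the proof of the Theorem above, $\rho_{\alpha}(\varphi)(f)-f\cdot\rho_{\alpha}(\varphi)(1)=\varphi(f)-f\cdot\varphi(1)$, so the structure equation holds for both representations with one and the same commutator bracket $[\varphi,\psi]$. Hence the two differentials $\delta$ and $\delta_{\alpha}$ can differ only through the anchor, and the anchor differs precisely by the multiplication operator $\varphi\longmapsto\alpha(\varphi)\cdot$. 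This is what should, after contraction, produce the factor $\alpha\Lambda(\cdot)$.

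First I would pin down the Lie-derivative operators. Writing $\theta_{\varphi}$ for the operator attached to $\mathrm{id}$ and $\theta_{\varphi}^{\alpha}$ for the one attached to $\rho_{\alpha}$, I claim that
\[
\theta_{\varphi}^{\alpha}=\theta_{\varphi}+\alpha(\varphi)\cdot\mathrm{id}.
\]
The point is that $\theta_{\varphi}$ is uniquely determined among degree-$zero$ operators by the two relations $\theta_{\varphi}a=\rho(\varphi)(a)$ on $C^{\infty}(M)$ and $[\theta_{\varphi},i_{\psi}]=i_{[\varphi,\psi]}$, since together these compute $i_{\psi}(\theta_{\varphi}\eta)$ from the value of $\theta_{\varphi}$ one degree lower. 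So it suffices to check that $\theta_{\varphi}+\alpha(\varphi)\cdot\mathrm{id}$ satisfies both relations for $\rho_{\alpha}$: on functions it gives $\varphi(a)+\alpha(\varphi)a=\rho_{\alpha}(\varphi)(a)$, and since multiplication by the function $\alpha(\varphi)$ is $C^{\infty}(M)$-linear it commutes with every $i_{\psi}$, whence $[\theta_{\varphi}+\alpha(\varphi)\cdot\mathrm{id},i_{\psi}]=[\theta_{\varphi},i_{\psi}]=i_{[\varphi,\psi]}$, the bracket being the common one.

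Then I would run an induction on the degree $p$ of $\eta$, using $i_{\varphi}\delta_{\alpha}\eta=\theta_{\varphi}^{\alpha}\eta-\delta_{\alpha}(i_{\varphi}\eta)$. In degree $0$ the identity is immediate from $\theta_{\varphi}^{\alpha}$ acting on functions. For $p\geq1$, $i_{\varphi}\eta$ has degree $p-1$, so the inductive hypothesis gives $\delta_{\alpha}(i_{\varphi}\eta)=\delta(i_{\varphi}\eta)+\alpha\Lambda(i_{\varphi}\eta)$; substituting this together with the Lie-derivative relation above and regrouping via $i_{\varphi}\delta\eta=\theta_{\varphi}\eta-\delta(i_{\varphi}\eta)$ yields
\[
i_{\varphi}\delta_{\alpha}\eta=i_{\varphi}\delta\eta+\alpha(\varphi)\eta-\alpha\Lambda(i_{\varphi}\eta).
\]
Because $i_{\varphi}$ is a degree-$(-1)$ derivation and $i_{\varphi}\alpha=\alpha(\varphi)$, the last two terms are exactly $i_{\varphi}(\alpha\Lambda\eta)$, whence $i_{\varphi}\delta_{\alpha}\eta=i_{\varphi}(\delta\eta+\alpha\Lambda\eta)$ for every $\varphi$; since a form of positive degree is determined by its contractions, the identity follows.

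The step I expect to be the main obstacle is the uniqueness argument for $\theta_{\varphi}^{\alpha}$ and the attendant sign bookkeeping for $i_{\varphi}(\alpha\Lambda\eta)$; everything else is a mechanical regrouping. As a safeguard I would also verify the statement directly from the explicit Koszul expression for $d_{\rho}$: there the bracket summands coincide for the two representations and cancel on subtraction, leaving $\sum_{i}(-1)^{i}\alpha(\varphi_{i})\,\eta(\varphi_{0},\ldots,\widehat{\varphi_{i}},\ldots,\varphi_{p})$, which is precisely $(\alpha\Lambda\eta)(\varphi_{0},\ldots,\varphi_{p})$, and I would cross-check the normalisation in degree $0$, where $(\delta_{\alpha}f)(\varphi)=\varphi(f)+\alpha(\varphi)f=(\delta f+\alpha\Lambda f)(\varphi)$.
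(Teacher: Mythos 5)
Your argument is correct, and its core is genuinely different from the paper's. The paper proves the proposition in one stroke: it writes out the Koszul formula for $\delta_{\alpha}$, substitutes $\rho_{\alpha}(\varphi_{i})=\varphi_{i}+\alpha(\varphi_{i})$ in the anchor terms, notes that the bracket terms are those of $\delta$ unchanged, and identifies the leftover sum $\sum_{i}(-1)^{i-1}\alpha(\varphi_{i})\cdot\eta(\varphi_{1},\ldots,\widehat{\varphi_{i}},\ldots,\varphi_{p+1})$ with $(\alpha\Lambda\eta)(\varphi_{1},\ldots,\varphi_{p+1})$ --- which is precisely the computation you relegate to a ``safeguard'' at the end, so that cross-check alone is already a complete proof and is in fact the paper's proof. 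Your primary route --- establishing $\theta_{\varphi}^{\alpha}=\theta_{\varphi}+\alpha(\varphi)\cdot\mathrm{id}$ from the uniqueness of a degree-zero operator $T$ satisfying $Ta=\rho(\varphi)(a)$ and $[T,i_{\psi}]=i_{[\varphi,\psi]}$, then inducting on the degree of $\eta$ via the Cartan identity $i_{\varphi}\delta_{\alpha}=\theta_{\varphi}^{\alpha}-\delta_{\alpha}i_{\varphi}$ --- is sound: the uniqueness argument works because a form of positive degree is recovered from its contractions, the Lie bracket (hence the term $i_{[\varphi,\psi]}$) is common to the two Lie-Rinehart structures since only the anchor changes, and your sign $i_{\varphi}(\alpha\Lambda\eta)=\alpha(\varphi)\cdot\eta-\alpha\Lambda(i_{\varphi}\eta)$ agrees with the paper's convention for the product of a $1$-form with a $p$-form, as one checks against the explicit formula above. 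What your route buys is conceptual clarity: it localizes the discrepancy between $\delta$ and $\delta_{\alpha}$ entirely in the anchor, and touches the wedge product only through the single derivation identity for $i_{\varphi}$, with no multilinear bookkeeping. What the paper's direct expansion buys is economy: it is a few lines, needs no auxiliary uniqueness lemma for the Lie derivative, and no induction.
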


\begin{proof}
For any $\eta\in\mathfrak{L}_{sks}^{p}(\mathcal{D}(M),C^{\infty}(M))$ and for
any $\varphi_{1},...,\varphi_{p+1}\in\mathcal{D}(M)$, we have%
\begin{align*}
(\delta_{\alpha}\eta)(\varphi_{1},...,\varphi_{p+1})  &  =%
{\displaystyle\sum\limits_{i=1}^{p+1}}
(-1)^{i-1}\rho_{\alpha}(\varphi_{i})\left[  \eta(\varphi_{1},...,\widehat
{\varphi_{i}},...,\varphi_{p+1})\right] \\
&  +%
{\displaystyle\sum\limits_{1\leq i<j\leq p+1}}
(-1)^{i+j}\eta(\left[  \varphi_{i},\varphi_{j}\right]  ,\varphi_{1}%
,...,\widehat{\varphi_{i}},...,\widehat{\varphi_{j}},...,\varphi_{p+1})\\
&  =%
{\displaystyle\sum\limits_{i=1}^{p+1}}
(-1)^{i-1}\varphi_{i}\left[  \eta(\varphi_{1},...,\widehat{\varphi_{i}%
},...,\varphi_{p+1})\right] \\
&  +%
{\displaystyle\sum\limits_{i=1}^{p+1}}
(-1)^{i-1}\left[  \eta(\varphi_{1},...,\widehat{\varphi_{i}},...,\varphi
_{p+1})\right]  \cdot\alpha(\varphi_{i})\\
&  +%
{\displaystyle\sum\limits_{1\leq i<j\leq p+1}}
(-1)^{i+j}\eta(\left[  \varphi_{i},\varphi_{j}\right]  ,\varphi_{1}%
,...,\widehat{\varphi_{i}},...,\widehat{\varphi_{j}},...,\varphi_{p+1})\\
&  =(\delta\eta+\alpha\Lambda\eta)(\varphi_{1},...,\varphi_{p+1})\text{.}%
\end{align*}

That ends the proof.
\end{proof}

The characterization of symplectic Lie-Rinehart-Jacobi algebra structure on
$\mathcal{D}(M)$ is the following one:

\begin{proposition}
The $C^{\infty}(M)$-module $\mathcal{D}(M)$ admits a symplectic
Lie-Rinehart-Jacobi algebra structure if and only if there exists a
$C^{\infty}(M)$-linear form%
\[
\alpha:\mathcal{D}(M)\longrightarrow C^{\infty}(M)
\]
and a nondegenerate skew-symmetric bilinear form%
\[
\omega:\mathcal{D}(M)\times\mathcal{D}(M)\longrightarrow C^{\infty}(M)
\]
such that
\end{proposition}

\begin{enumerate}
\item $\delta\alpha=(\delta1)\Lambda\alpha$

\item $\delta\omega=-\alpha\Lambda\omega$.
\end{enumerate}

\begin{proof}
It is obvious.
\end{proof}

\subsection{Structure of contact manifold on $M$ when $\mathcal{D}(M)$ admits
a symplectic Lie-Rinehart-Jacobi algebra structure}

In this part, we consider a symplectic Lie-Rinehart-Jacobi algebra structure
on $\mathcal{D}(M)$ with a linear form%
\[
\alpha:\mathcal{D}(M)\longrightarrow C^{\infty}(M)
\]
such that
\[
\delta\alpha=(\delta1)\Lambda\alpha
\]
and a nondegenerate skew-symmetric bilinear form%
\[
\omega:\mathcal{D}(M)\times\mathcal{D}(M)\longrightarrow C^{\infty}(M)
\]
such that
\[
\delta\omega=-\alpha\Lambda\omega\text{.}%
\]

In this case if $\mathcal{D}(M)^{\ast}$ denotes the dual of the $C^{\infty
}(M)$-module $\mathcal{D}(M)$, the map%
\[
\mathcal{D}(M)\longrightarrow\mathcal{D}(M)^{\ast},\varphi\longmapsto
i_{\varphi}\omega,
\]
is an isomorphism of $C^{\infty}(M)$-modules.

\begin{proposition}
There exists an unique vector field $H$ on $M$ such that
\[
i_{H}\omega=-\delta1\text{.}%
\]
Moreover the linear form
\[
i_{1}\omega:\mathcal{D}(M)\longrightarrow C^{\infty}(M),\varphi\longmapsto
\omega(1,\varphi),
\]
is such that
\[
(i_{1}\omega)(H)=1\text{.}%
\]

\end{proposition}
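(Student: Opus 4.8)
The plan is to split the statement into three assertions and dispatch them in order: the existence and uniqueness of an operator $H\in\mathcal{D}(M)$ solving $i_{H}\omega=-\delta1$, the fact that this $H$ is actually a vector field, and the normalization $(i_{1}\omega)(H)=1$. The first is immediate from nondegeneracy: since $\omega$ is nondegenerate, the map $\varphi\longmapsto i_{\varphi}\omega$ is an isomorphism of $C^{\infty}(M)$-modules from $\mathcal{D}(M)$ onto $\mathcal{D}(M)^{\ast}=\mathfrak{L}_{sks}^{1}(\mathcal{D}(M),C^{\infty}(M))$. One checks that $\delta1$ is indeed an element of this dual, namely the $C^{\infty}(M)$-linear form $\varphi\longmapsto\varphi(1)$: for the identity representation the formula $(\delta f)(\varphi)=\varphi(f)$ holds, this being the specialization of the identity $\theta_{x}a=[\rho(x)](a)$ recorded in the introduction. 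Hence $-\delta1$ has exactly one preimage $H$ under the isomorphism, giving existence and uniqueness of $H$ in $\mathcal{D}(M)$.

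The step I expect to carry the real content is showing that $H$ is a vector field, i.e. that $H(1)=0$. My first instinct would be to substitute the triple $(H,1,\varphi)$ into the structure equation $\delta\omega=-\alpha\Lambda\omega$ and grind out a relation, but I expect the cleaner route to avoid the structure equation entirely and use only skew-symmetry. The trick is to evaluate the defining identity $i_{H}\omega=-\delta1$ on $H$ itself: the left-hand side is $\omega(H,H)=0$ by skew-symmetry of $\omega$ (the base field has characteristic zero), while the right-hand side is $-(\delta1)(H)=-H(1)$. Comparing the two gives $H(1)=0$, so $H$ lies in $\mathfrak{X}(M)$ and is a vector field, as required.

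Finally, the normalization follows from the same defining identity together with skew-symmetry, with no further input: $(i_{1}\omega)(H)=\omega(1,H)=-\omega(H,1)=-(i_{H}\omega)(1)=(\delta1)(1)=1(1)=1$. The one place to stay careful is the double role of the symbol $1$ — the constant function on which the differential operators act versus the unit (identity) operator of $\mathcal{D}(M)$ — together with the formula $(\delta f)(\varphi)=\varphi(f)$ used twice above. Once these conventions are pinned down, there is no genuine obstacle: the whole argument rests only on the nondegeneracy and the skew-symmetry of $\omega$, and not on the closedness conditions $\delta\alpha=(\delta1)\Lambda\alpha$ or $\delta\omega=-\alpha\Lambda\omega$.
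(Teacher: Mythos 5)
Your proof is correct and takes essentially the same route as the paper: nondegeneracy of $\omega$ produces the unique $H\in\mathcal{D}(M)$ with $i_{H}\omega=-\delta1$, and evaluating this identity on $H$ itself (skew-symmetry giving $\omega(H,H)=0$, hence $H(1)=0$) and on $1$ (giving $(i_{1}\omega)(H)=(\delta1)(1)=1$) yields the two remaining claims. You merely make explicit the computations that the paper's terse proof leaves implicit, including the observation that the structure equations for $\alpha$ and $\omega$ play no role here.
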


\begin{proof}
As
\[
\omega:\mathcal{D}(M)\times\mathcal{D}(M)\longrightarrow C^{\infty}(M)
\]
is nondegenerate, let $H\in\mathcal{D}(M)$ be the unique differential operator
such that $i_{H}\omega=-\delta1$. We have $H(1)=0$. Thus $H$ is a vector field.

We deduce that%
\[
(i_{1}\omega)(H)=1\text{.}%
\]

And that ends the proof.
\end{proof}

\begin{proposition}
We get%
\[
\mathfrak{X}(M)=Ker\left[  i_{1}\omega|_{\mathfrak{X}(M)}\right]  \oplus
C^{\infty}(M)\cdot H\text{.}%
\]

\end{proposition}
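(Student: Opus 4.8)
The plan is to exploit the single scalar identity $(i_{1}\omega)(H)=1$ supplied by the previous proposition, which makes $H$ behave as a section splitting off the submodule $C^{\infty}(M)\cdot H$. I would write $\lambda=i_{1}\omega|_{\mathfrak{X}(M)}$ for the restricted $C^{\infty}(M)$-linear form $\lambda:\mathfrak{X}(M)\longrightarrow C^{\infty}(M)$; since $H$ is a vector field (established above) we have $H\in\mathfrak{X}(M)$ and, as $H$ lies in the domain of restriction, $\lambda(H)=(i_{1}\omega)(H)=1$.

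First I would verify that the two summands span $\mathfrak{X}(M)$. Given any $X\in\mathfrak{X}(M)$, I would introduce the candidate decomposition
\[
X=\left(X-\lambda(X)\cdot H\right)+\lambda(X)\cdot H\text{.}
\]
Applying $\lambda$ to the first term and using $C^{\infty}(M)$-linearity together with $\lambda(H)=1$ gives $\lambda\left(X-\lambda(X)\cdot H\right)=\lambda(X)-\lambda(X)\lambda(H)=0$, so the first term lies in $Ker\left[\lambda\right]$, while the second manifestly lies in $C^{\infty}(M)\cdot H$. Hence $\mathfrak{X}(M)=Ker\left[\lambda\right]+C^{\infty}(M)\cdot H$.

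Next I would check that the sum is direct. If $Y$ belongs to both summands, then $Y=f\cdot H$ for some $f\in C^{\infty}(M)$ and $\lambda(Y)=0$; but $\lambda(f\cdot H)=f\cdot\lambda(H)=f$, forcing $f=0$ and hence $Y=0$. Therefore $Ker\left[\lambda\right]\cap C^{\infty}(M)\cdot H=\{0\}$, which combined with the previous paragraph yields the claimed direct sum decomposition.

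I do not expect any serious obstacle here: the whole argument rests on the normalization $\lambda(H)=1$, which is precisely what converts the linear form $\lambda$ into a projection onto $C^{\infty}(M)\cdot H$ with kernel $Ker\left[\lambda\right]$. The only point requiring a little care is that $\lambda(X)$ is a genuine element of $C^{\infty}(M)$, not merely a real constant, so that $\lambda(X)\cdot H$ really belongs to the module $C^{\infty}(M)\cdot H$; this is immediate from the $C^{\infty}(M)$-bilinearity of $\omega$, and the nondegeneracy of $\omega$ is not even needed beyond what was already used to produce $H$.
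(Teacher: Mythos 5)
Your proof is correct and follows essentially the same route as the paper: the identical decomposition $X=\left(X-(i_{1}\omega)(X)\cdot H\right)+(i_{1}\omega)(X)\cdot H$, with membership in the kernel and directness of the sum both coming from the normalization $(i_{1}\omega)(H)=1$. You have merely written out explicitly the two verifications that the paper states with ``we verify that,'' which is a fair and complete elaboration.
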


\begin{proof}
For any $X\in\mathfrak{X}(M)$, we write
\[
X=\left[  X-(i_{1}\omega)(X)\cdot H\right]  +(i_{1}\omega)(X)\cdot H\text{.}%
\]
We verify that
\[
\left[  X-(i_{1}\omega)(X)\cdot H\right]  \in Ker\left[  i_{1}\omega
|_{\mathfrak{X}(M)}\right]
\]
and%
\[
Ker\left[  i_{1}\omega|_{\mathfrak{X}(M)}\right]  \cap C^{\infty}(M)\cdot
H=\left\{  0\right\}  .
\]
Thus%
\[
\mathfrak{X}(M)=Ker\left[  i_{1}\omega|_{\mathfrak{X}(M)}\right]  \oplus
C^{\infty}(M)\cdot H\text{.}%
\]

That ends the proof.
\end{proof}

The sets
\[
\mathcal{D}(M)_{H}^{\ast}=\left\{  \eta\in\mathcal{D}(M)^{\ast}/\eta
(H)=0\right\}
\]
and%
\[
\mathcal{D}(M)_{C^{\infty}(M),H}^{\ast}=\left\{  \eta\in\mathcal{D}(M)^{\ast
}/\eta|_{C^{\infty}(M)}=0;\eta(H)=0\right\}
\]
are modules over $C^{\infty}(M)$.

For any $X\in\mathfrak{X}(M)$ ( $X\in$ $Ker\left[  i_{1}\omega|_{\mathfrak{X}%
(M)}\right]  $ respectively), we verify that $i_{X}\omega\in\mathcal{D}%
(M)_{H}^{\ast}$ \ ( $i_{X}\omega\in\mathcal{D}(M)_{C^{\infty}(M),H}^{\ast}$ respectively).

\begin{proposition}
The following maps%
\[
\mathfrak{X}(M)\longrightarrow\mathcal{D}(M)_{H}^{\ast},X\longmapsto
i_{X}\omega,
\]
and%
\[
Ker\left[  i_{1}\omega|_{\mathfrak{X}(M)}\right]  \longrightarrow
\mathcal{D}(M)_{C^{\infty}(M),H}^{\ast},X\longmapsto i_{X}\omega,
\]
are isomorphisms of $C^{\infty}(M)$-modules.
\end{proposition}

\begin{proof}
Since the map%
\[
\mathcal{D}(M)\longrightarrow\mathcal{D}(M)^{\ast},\varphi\longmapsto
i_{\varphi}\omega,
\]
is an isomorphism of $C^{\infty}(M)$-modules, then the maps
\[
\mathfrak{X}(M)\longrightarrow\mathcal{D}(M)_{H}^{\ast},X\longmapsto
i_{X}\omega,
\]
and
\[
Ker\left[  i_{1}\omega|_{\mathfrak{X}(M)}\right]  \longrightarrow
\mathcal{D}(M)_{C^{\infty}(M),H}^{\ast},X\longmapsto i_{X}\omega,
\]
are injective.

Let $\eta\in\mathcal{D}(M)_{H}^{\ast}$ be a linear form on $\mathcal{D}(M)$
such that $\eta(H)=0$ and let $\varphi$ be the unique element of
$\mathcal{D}(M)$ such that%
\[
i_{\varphi}\omega=\eta\text{.}%
\]
We get%
\begin{align*}
0  &  =\eta(H)\\
&  =i_{\varphi}\omega(H)\\
&  =-(i_{H}\omega)(\varphi)\\
&  =(\delta1)(\varphi)\\
&  =\varphi(1)\text{.}%
\end{align*}
We deduce that $\varphi\in\mathfrak{X}(M)$. Thus the map%
\[
\mathfrak{X}(M)\longrightarrow\mathcal{D}(M)_{H}^{\ast},X\longmapsto
i_{X}\omega,
\]
is also surjective.

Let $\sigma\in\mathcal{D}(M)_{C^{\infty}(M),H}^{\ast}$ be a linear form on
$\mathcal{D}(M)$ such that $\sigma|_{C^{\infty}(M)}=0$ and $\sigma(H)=0$, and
let $\varphi$ be the unique element of $\mathcal{D}(M)$ such that%
\[
i_{\varphi}\omega=\sigma\text{.}%
\]
As $\sigma(H)=0$, then $\varphi\in\mathfrak{X}(M)$.

Since $\sigma|_{C^{\infty}(M)}=0$, we obtain%
\begin{align*}
0  &  =\sigma(1)\\
&  =(i_{\varphi}\omega)(1)\\
&  =-\left[  i_{1}\omega\right]  (\varphi).
\end{align*}
We deduce that $\varphi\in Ker\left[  i_{1}\omega|_{\mathfrak{X}(M)}\right]
$. Thus the map%
\[
Ker\left[  i_{1}\omega|_{\mathfrak{X}(M)}\right]  \longrightarrow
\mathcal{D}(M)_{C^{\infty}(M),H}^{\ast},X\longmapsto i_{X}\omega,
\]
is also surjective.
\end{proof}

\bigskip

\begin{corollary}
The restriction%
\[
\omega|_{Ker\left[  i_{1}\omega|_{\mathfrak{X}(M)}\right]  \times Ker\left[
i_{1}\omega|_{\mathfrak{X}(M)}\right]  }:Ker\left[  i_{1}\omega|_{\mathfrak{X}%
(M)}\right]  \times Ker\left[  i_{1}\omega|_{\mathfrak{X}(M)}\right]
\longrightarrow C^{\infty}(M)
\]
is a nondegenerate skew-symmetric bilinear form on $Ker\left[  i_{1}%
\omega|_{\mathfrak{X}(M)}\right]  $.
\end{corollary}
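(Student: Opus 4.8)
The plan is to deduce everything from the nondegeneracy of $\omega$ on $\mathcal{D}(M)$ together with the two preceding propositions. Write $\mathcal{K}=Ker\left[ i_{1}\omega|_{\mathfrak{X}(M)}\right] $. Skew-symmetry of the restriction is immediate, being inherited from $\omega$, so the whole issue is nondegeneracy, which I read as: the induced $C^{\infty}(M)$-linear map $\mathcal{K}\longrightarrow\mathcal{K}^{\ast}$, $X\longmapsto\omega(X,\cdot)|_{\mathcal{K}}$, is an isomorphism. Throughout I would use the direct sum decomposition
\[
\mathcal{D}(M)=C^{\infty}(M)\cdot 1\oplus\mathcal{K}\oplus C^{\infty}(M)\cdot H,
\]
obtained by combining $\mathcal{D}(M)=C^{\infty}(M)\cdot 1\oplus\mathfrak{X}(M)$ (each $\varphi$ splits as $\varphi(1)+X$) with the previous proposition $\mathfrak{X}(M)=\mathcal{K}\oplus C^{\infty}(M)\cdot H$.

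For injectivity, suppose $X\in\mathcal{K}$ satisfies $\omega(X,Y)=0$ for every $Y\in\mathcal{K}$. The key observation is that $X$ is then $\omega$-orthogonal to all three summands above. Indeed $\omega(X,1)=-(i_{1}\omega)(X)=0$ since $X\in\mathcal{K}$, and $\omega(X,H)=-(i_{H}\omega)(X)=(\delta1)(X)=X(1)=0$ since $X$ is a vector field; the remaining direction is killed by hypothesis. Using the $C^{\infty}(M)$-bilinearity of $\omega$ to absorb the coefficients attached to $1$ and $H$, I conclude that $\omega(X,\varphi)=0$ for every $\varphi\in\mathcal{D}(M)$, i.e. $i_{X}\omega=0$, whence $X=0$ by nondegeneracy of $\omega$ on $\mathcal{D}(M)$.

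For surjectivity I would invoke the previous proposition directly. Given $\lambda\in\mathcal{K}^{\ast}$, extend it to $\widetilde{\lambda}\in\mathcal{D}(M)^{\ast}$ by declaring it to vanish on $C^{\infty}(M)\cdot 1$ and on $C^{\infty}(M)\cdot H$; by construction $\widetilde{\lambda}|_{C^{\infty}(M)}=0$ and $\widetilde{\lambda}(H)=0$, so $\widetilde{\lambda}\in\mathcal{D}(M)_{C^{\infty}(M),H}^{\ast}$. The isomorphism $\mathcal{K}\longrightarrow\mathcal{D}(M)_{C^{\infty}(M),H}^{\ast}$, $X\longmapsto i_{X}\omega$, of the previous proposition then furnishes $X\in\mathcal{K}$ with $i_{X}\omega=\widetilde{\lambda}$, and restricting to $\mathcal{K}$ gives $\omega(X,Y)=\lambda(Y)$ for all $Y\in\mathcal{K}$. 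The only delicate point — the main obstacle — is checking that the two exceptional directions $1$ and $H$ genuinely drop out; this is exactly the content of the vanishing $\omega(X,1)=0$ and $\omega(X,H)=0$ established above, which is precisely what allows the full nondegeneracy of $\omega$ on $\mathcal{D}(M)$ to descend to the submodule $\mathcal{K}$. Everything else is routine bookkeeping with the direct sum decomposition.
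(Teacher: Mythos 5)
Your proof is correct and takes essentially the same approach as the paper: the paper's one-line proof simply asserts that the map $Ker\left[ i_{1}\omega|_{\mathfrak{X}(M)}\right] \longrightarrow Ker\left[ i_{1}\omega|_{\mathfrak{X}(M)}\right]^{\ast}$, $X\longmapsto i_{X}\omega|_{Ker\left[ i_{1}\omega|_{\mathfrak{X}(M)}\right]}$, is an isomorphism of $C^{\infty}(M)$-modules, which is exactly what you verify. Your surjectivity step (extension by zero on the complementary summands plus the isomorphism onto $\mathcal{D}(M)_{C^{\infty}(M),H}^{\ast}$ from the preceding proposition) and your injectivity step (the orthogonality relations $\omega(X,1)=0$ and $\omega(X,H)=0$) just spell out the details the paper declares obvious.
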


\begin{proof}
It is obvious since the map
\[
Ker\left[  i_{1}\omega|_{\mathfrak{X}(M)}\right]  \longrightarrow Ker\left[
i_{1}\omega|_{\mathfrak{X}(M)}\right]  ^{\ast},X\longmapsto i_{X}%
\omega|_{Ker\left[  i_{1}\omega|_{\mathfrak{X}(M)}\right]  },
\]

is an isormorphism of $C^{\infty}(M)$-modules.
\end{proof}

\bigskip For any $f\in C^{\infty}(M)$, the linear form
\[
\delta_{\alpha}f-\left[  H(f)+f\cdot\alpha(H)\right]  \cdot i_{1}\omega
-f\cdot\left[  1+\alpha(1)\right]  \cdot\delta1:\mathcal{D}(M)\longrightarrow
C^{\infty}(M),
\]
belongs to $\mathcal{D}(M)_{C^{\infty}(M),H}^{\ast}$. We denote $\varphi_{f}$
the unique element of $\mathcal{D}(M)$ such that
\[
i_{\varphi_{f}}\omega=\delta_{\alpha}f
\]
and $X_{f}$ the unique element of $Ker\left[  i_{1}\omega|_{\mathfrak{X}%
(M)}\right]  $ such that
\[
i_{X_{f}}\omega=\delta_{\alpha}f-\left[  H(f)+f\cdot\alpha(H)\right]  \cdot
i_{1}\omega-f\cdot\left[  1+\alpha(1)\right]  \cdot\delta1\text{.}%
\]
For any $f,g\in C^{\infty}(M)$, the bracket
\[
\left\{  f,g\right\}  =-\omega(\varphi_{f},\varphi_{g})
\]
is a Jacobi bracket on $C^{\infty}(M)$. Thus $M$ is a Jacobi manifold
\cite{oka1}.

We verify that
\[
\varphi_{f}=\left[  \rho_{\alpha}(H)\right]  (f)+X_{f}-f\cdot\left[
1+\alpha(1)\right]  \cdot H\text{.}%
\]
If we denote $H_{\alpha}=\left[  1+\alpha(1)\right]  \cdot\rho_{\alpha}(H)$,
then we have%
\[
\left\{  f,g\right\}  =-\omega(X_{f},X_{g})-f\cdot H_{\alpha}(g)+g\cdot
H_{\alpha}(f)\text{.}%
\]

\begin{remark}
We recall that as $\omega$ is a nondegenerate skew-symmetric bilinear form on
$\mathcal{D}(M)$, then the dimension of $M$ is odd \cite{oka3}.
\end{remark}

\begin{theorem}
\bigskip If the dimension of $M$ is $2n+1$, then the differential form%
\[
\left[  i_{1}\omega|_{\mathfrak{X}(M)}\right]  \Lambda\left[  \omega
|_{\mathfrak{X}(M)\times\mathfrak{X}(M)}\right]  ^{n}%
\]
is a volume form on $M$.
\end{theorem}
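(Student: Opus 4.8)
The plan is to reduce the claim to a pointwise nonvanishing statement: writing $\eta=i_{1}\omega|_{\mathfrak{X}(M)}$ for the $1$-form and $\Omega=\omega|_{\mathfrak{X}(M)\times\mathfrak{X}(M)}$ for the $2$-form on $M$, the expression $\eta\Lambda\Omega^{n}$ is a $(2n+1)$-form on the $(2n+1)$-dimensional manifold $M$, so it is a volume form precisely when it vanishes nowhere. I would therefore fix an arbitrary point $p\in M$ and exhibit a single frame on which $\eta\Lambda\Omega^{n}$ takes a nonzero value. The natural frame is the one adapted to the decomposition already proved, $\mathfrak{X}(M)=Ker\left[i_{1}\omega|_{\mathfrak{X}(M)}\right]\oplus C^{\infty}(M)\cdot H$: at $p$ take $\{H_{p},e_{1},\dots,e_{2n}\}$ with $e_{1},\dots,e_{2n}$ a basis of $Ker\left[\eta_{p}\right]$. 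The facts I would record beforehand are $\eta(H)=(i_{1}\omega)(H)=1$ from the Proposition that produced $H$, $\eta(e_{i})=0$ by construction, and the nondegeneracy of $\Omega$ on $Ker\left[\eta\right]$ from the Corollary. I would also note, as a corroborating remark, that $i_{H}\Omega=0$: restricting $i_{H}\omega=-\delta1$ to $\mathfrak{X}(M)$ and using $(\delta1)(X)=X(1)=0$ for every vector field $X$ shows $H$ spans exactly the characteristic direction complementary to the symplectic distribution $Ker\left[\eta\right]$.

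The core computation is to evaluate $(\eta\Lambda\Omega^{n})(H_{p},e_{1},\dots,e_{2n})$. Since $\eta$ is a $1$-form, the wedge with the $2n$-form $\Omega^{n}$ antisymmetrizes as an alternating sum over which argument is fed to $\eta$, so only the argument on which $\eta$ is nonzero contributes. As $\eta(e_{i})=0$ for all $i$ and $\eta(H)=1$, this collapses to the single term $\eta(H)\cdot\Omega^{n}(e_{1},\dots,e_{2n})=\Omega^{n}(e_{1},\dots,e_{2n})$. Because $e_{1},\dots,e_{2n}$ all lie in $Ker\left[\eta_{p}\right]$, evaluating $\Omega^{n}$ on them only uses the values $\Omega(e_{i},e_{j})$, so $\Omega^{n}(e_{1},\dots,e_{2n})=\left(\Omega|_{Ker\left[\eta\right]}\right)^{n}(e_{1},\dots,e_{2n})$.

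To finish I would invoke the Corollary once more: $\Omega|_{Ker\left[\eta\right]}$ is a nondegenerate skew-symmetric bilinear form on the $2n$-dimensional space $Ker\left[\eta_{p}\right]$, hence a symplectic form, and the top exterior power of a symplectic form on a $2n$-dimensional space is a nonzero $2n$-form. Therefore $\left(\Omega|_{Ker\left[\eta\right]}\right)^{n}(e_{1},\dots,e_{2n})\neq0$, giving $(\eta\Lambda\Omega^{n})(H_{p},e_{1},\dots,e_{2n})\neq0$. Since $p$ was arbitrary, $\eta\Lambda\Omega^{n}$ vanishes at no point of $M$ and is a volume form.

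The only genuinely delicate point is the bookkeeping in the wedge expansion, that is, making rigorous that feeding the adapted frame into $\eta\Lambda\Omega^{n}$ leaves exactly the term $\eta(H)\cdot\Omega^{n}(e_{1},\dots,e_{2n})$; the rest is routine once one appeals to the standard fact that a symplectic form has nonvanishing top power. The passage from the $C^{\infty}(M)$-module statements (nondegeneracy of $\Omega$ on $Ker\left[\eta\right]$) to the pointwise linear-algebra statement at $T_{p}M$ should also be stated carefully, but it follows from the fact that nondegeneracy of the module pairing forces nondegeneracy fiberwise.
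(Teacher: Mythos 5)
Your proposal is correct and follows essentially the same route as the paper: fix an arbitrary point, take the frame $(H(x),v_{1},\dots,v_{2n})$ adapted to the decomposition $\mathfrak{X}(M)=Ker\left[i_{1}\omega|_{\mathfrak{X}(M)}\right]\oplus C^{\infty}(M)\cdot H$, use $(i_{1}\omega)(H)=1$ together with the pointwise nondegeneracy of $\omega$ on the kernel to see that evaluating the $(2n+1)$-form on this frame gives the nonzero quantity $\left(\omega|_{Ker\left[i_{1}\omega|_{\mathfrak{X}(M)}\right]\times Ker\left[i_{1}\omega|_{\mathfrak{X}(M)}\right]}\right)^{n}(x)(v_{1},\dots,v_{2n})$. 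Your extra remark $i_{H}\Omega=0$ is not needed (as you note), and both your argument and the paper's pass from module-level to fiberwise nondegeneracy with the same brief justification.
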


\begin{proof}
For any $x\in M$ we have $H(x)\neq0$ since $(i_{1}\omega)(H)=1$. Thus the
$1$-form $\left[  i_{1}\omega|_{\mathfrak{X}(M)}\right]  $ is nonzero
everywhere. Let $x\in M$ and let $T_{x}M$ be the tangent vector space at $x$.
As the dimension of $M$ is odd, let $2n+1$ be the dimension of $M$. The set%
\[
(Ker\left[  i_{1}\omega|_{\mathfrak{X}(M)}\right]  )_{x}=\left\{  X(x)\in
T_{x}M/X\in Ker\left[  i_{1}\omega|_{\mathfrak{X}(M)}\right]  \right\}
\]
is a vector space of dimension $2n$. Since
\[
\omega|_{Ker\left[  i_{1}\omega|_{\mathfrak{X}(M)}\right]  \times Ker\left[
i_{1}\omega|_{\mathfrak{X}(M)}\right]  }:Ker\left[  i_{1}\omega|_{\mathfrak{X}%
(M)}\right]  \times Ker\left[  i_{1}\omega|_{\mathfrak{X}(M)}\right]
\longrightarrow C^{\infty}(M)
\]
is a nondegenerate skew-symmetric bilinear form on the $C^{\infty}(M)$-module%
\[
Ker\left[  i_{1}\omega|_{\mathfrak{X}(M)}\right]  \text{,}%
\]
then $(\omega|_{Ker\left[  i_{1}\omega|_{\mathfrak{X}(M)}\right]  \times
Ker\left[  i_{1}\omega|_{\mathfrak{X}(M)}\right]  })(x)$ is a nondegenerate
skew-symmetric bilinear form on the vector space $(Ker\left[  i_{1}%
\omega|_{\mathfrak{X}(M)}\right]  )_{x}$ .

We deduce that $Ker\left[  i_{1}\omega|_{\mathfrak{X}(M)}\right]  )_{x}$ is a
symplectic vector space and
\[
(\omega|_{Ker\left[  i_{1}\omega|_{\mathfrak{X}(M)}\right]  \times Ker\left[
i_{1}\omega|_{\mathfrak{X}(M)}\right]  })^{n}(x)
\]
is a volume form. We also deduce that $(\omega|_{Ker\left[  i_{1}%
\omega|_{\mathfrak{X}(M)}\right]  \times Ker\left[  i_{1}\omega|_{\mathfrak{X}%
(M)}\right]  })^{n}(x)\neq0$. Let $(v_{1},v_{2},...,v_{2n})$ be a basis of
$(Ker\left[  i_{1}\omega|_{\mathfrak{X}(M)}\right]  )_{x}$. We have
\[
(\omega|_{Ker\left[  i_{1}\omega|_{\mathfrak{X}(M)}\right]  \times Ker\left[
i_{1}\omega|_{\mathfrak{X}(M)}\right]  })^{n}(x)\left(  v_{1},v_{2}%
,...,v_{2n}\right)  \neq0\text{.}%
\]
We note that
\[
\nu=\left[  i_{1}\omega|_{\mathfrak{X}(M)}\right]  (x)\Lambda(\omega
|_{Ker\left[  i_{1}\omega|_{\mathfrak{X}(M)}\right]  \times Ker\left[
i_{1}\omega|_{\mathfrak{X}(M)}\right]  })^{n}(x)
\]
is nonzero since
\begin{align*}
&  \nu(H(x),v_{1},v_{2},...,v_{2n})\\
&  =(\omega|_{Ker\left[  i_{1}\omega|_{\mathfrak{X}(M)}\right]  \times
Ker\left[  i_{1}\omega|_{\mathfrak{X}(M)}\right]  })^{n}(x)\left(  v_{1}%
,v_{2},...,v_{2n}\right)  \neq0\text{.}%
\end{align*}
As $x$ is abitrary, we conclude that $\left[  i_{1}\omega|_{\mathfrak{X}%
(M)}\right]  \Lambda\left[  \omega|_{\mathfrak{X}(M)\times\mathfrak{X}%
(M)}\right]  ^{n}$ is a volume form on $M$.
\end{proof}

\begin{corollary}
If $\mathcal{D}(M)$ admits a symplectic Lie-Rinehart-Jacobi algebra structure,
then $M$ is a nonexact contact manifold in the sense of Andr\'{e} Lichnerowicz.
\end{corollary}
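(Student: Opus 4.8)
The plan is to show that the existence of a symplectic Lie-Rinehart-Jacobi algebra structure on $\mathcal{D}(M)$ produces exactly the data required for a contact structure in the sense of Lichnerowicz, namely a global $1$-form whose restriction together with the induced $2$-form satisfies a maximal nondegeneracy (volume) condition. The preceding Theorem already establishes that $\left[ i_{1}\omega|_{\mathfrak{X}(M)}\right] \Lambda\left[ \omega|_{\mathfrak{X}(M)\times\mathfrak{X}(M)}\right]^{n}$ is a volume form on $M$ when $\dim M=2n+1$, so the geometric heart of the matter is already in hand; what remains is to interpret this analytically as a contact structure and then to identify in which sense it is \emph{nonexact}.

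First I would set $\eta=i_{1}\omega|_{\mathfrak{X}(M)}$, the restriction to vector fields of the linear form $\varphi\longmapsto\omega(1,\varphi)$. By the Theorem, $\eta\Lambda(d\eta\text{-type form})^{n}$ is a volume form; more precisely one must check that the $2$-form $\omega|_{\mathfrak{X}(M)\times\mathfrak{X}(M)}$ coincides up to the relevant identification with $d\eta$ (or differs from it by a term that vanishes on the relevant kernel), so that the volume condition $\eta\wedge(d\eta)^{n}\neq0$ is genuinely the contact condition on $\eta$. This is the step where I would be most careful: one should use the relation $\delta\omega=-\alpha\Lambda\omega$ together with Proposition on $\delta_{\alpha}$ to relate the intrinsic $2$-form $\omega$ restricted to $\mathfrak{X}(M)$ to the exterior derivative of $\eta$, rather than assuming them equal outright.

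Next I would invoke Lichnerowicz's definition of a contact manifold: a $(2n+1)$-manifold carrying a global $1$-form $\eta$ with $\eta\wedge(d\eta)^{n}$ nowhere vanishing. Having produced such an $\eta$ with the nonvanishing established in the Theorem, $M$ is a contact manifold. The decomposition $\mathfrak{X}(M)=Ker\left[ i_{1}\omega|_{\mathfrak{X}(M)}\right] \oplus C^{\infty}(M)\cdot H$ together with the normalization $(i_{1}\omega)(H)=1$ identifies $H$ as the Reeb field of $\eta$, which gives the expected geometric picture.

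The main obstacle, and the part that gives the corollary its real content, is the word \emph{nonexact}. In Lichnerowicz's terminology an exact contact structure is one whose associated Jacobi bracket comes from a Poisson structure in the appropriate sense, equivalently where the conformal factor $\alpha$ vanishes; here $\alpha$ is a genuinely nonzero linear form satisfying $\delta\alpha=(\delta1)\Lambda\alpha$ (equivalently $\alpha(1)\in\mathbb{R}$ and $d(\alpha|_{\mathfrak{X}(M)})=0$ by the Proposition), and it is precisely the presence of this $\alpha$ in the relation $\delta\omega=-\alpha\Lambda\omega$ that obstructs exactness. I would therefore close the argument by exhibiting the Jacobi bracket $\{f,g\}=-\omega(X_{f},X_{g})-f\cdot H_{\alpha}(g)+g\cdot H_{\alpha}(f)$ constructed above and observing that the first-order terms governed by $H_{\alpha}=\left[1+\alpha(1)\right]\cdot\rho_{\alpha}(H)$ prevent it from reducing to a Poisson bracket, which is exactly the failure of exactness in Lichnerowicz's sense. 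Verifying that this structural feature matches Lichnerowicz's precise criterion for nonexactness, rather than merely being nonzero, is the delicate point I would spend the most effort on.
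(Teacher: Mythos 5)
Your proposal rests on a misreading of what ``nonexact contact manifold in the sense of Lichnerowicz'' means, and as a result its central step is both unnecessary and, in the crucial case, false. In this paper (see the subsection treating contact manifolds, following \cite{lic1}), a contact manifold is a $(2n+1)$-manifold equipped with a \emph{pair}: a $1$-form $\beta$ and a $2$-form $\Omega$, \emph{not} assumed to satisfy $\Omega=d\beta$, such that $\beta\Lambda\Omega^{n}$ is a volume form; ``nonexact'' signals precisely that $\Omega$ is an independent datum rather than the exterior derivative of $\beta$. With that definition the Corollary is immediate from the preceding Theorem: the pair $\beta=i_{1}\omega|_{\mathfrak{X}(M)}$ and $\Omega=\omega|_{\mathfrak{X}(M)\times\mathfrak{X}(M)}$ already satisfies the volume condition, and there is nothing left to prove --- which is why the paper attaches no proof to this Corollary at all.

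Your plan instead tries to verify the \emph{classical} (exact) contact condition $\eta\Lambda(d\eta)^{n}\neq0$ for $\eta=i_{1}\omega|_{\mathfrak{X}(M)}$, by arguing that $\omega|_{\mathfrak{X}(M)\times\mathfrak{X}(M)}$ coincides with $d\eta$ up to terms vanishing on the relevant kernel. This cannot work. From the identity $\left[  1+\alpha(1)\right]  \cdot\omega=\delta_{\alpha}(i_{1}\omega)=\delta(i_{1}\omega)+\alpha\Lambda(i_{1}\omega)$ (proved in the paper just after this Corollary, using only $\delta\omega=-\alpha\Lambda\omega$), restriction to vector fields gives
\[
\left[  1+\alpha(1)\right]  \cdot\left(  \omega|_{\mathfrak{X}(M)\times
\mathfrak{X}(M)}\right)  =d\eta+\left(  \alpha|_{\mathfrak{X}(M)}\right)
\Lambda\eta\text{,}
\]
since $\delta$ restricted to forms evaluated on vector fields is the exterior differentiation $d$. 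Hence in the genuinely nonexact case $\alpha(1)=-1$ one gets $d\eta=-\left(  \alpha|_{\mathfrak{X}(M)}\right)  \Lambda\eta$, which forces $\eta\Lambda(d\eta)^{n}=0$ identically: the $1$-form $\eta$ is never a classical contact form there, so your reduction collapses exactly where the Corollary has content (and where it succeeds, namely $\alpha(1)\neq-1$, you would be proving exactness, the opposite of the stated conclusion). Your closing criterion for nonexactness is also not Lichnerowicz's: exactness is not about the Jacobi bracket failing to be Poisson (every contact manifold, exact or not, has a non-Poisson Jacobi bracket, since the fundamental vector field is nonzero), nor about $\alpha$ vanishing; it is about whether the $2$-form of the pair is ($\delta_{\alpha}$-)exact, which the paper settles after this Corollary via the dichotomy $\alpha(1)\neq-1$ versus $\alpha(1)=-1$.
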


\bigskip

In what follows, we give a generalization of exact and nonexact contact manifolds.

\begin{proposition}
We get
\[
\left[  1+\alpha(1)\right]  \cdot\omega=\delta_{\alpha}(i_{1}\omega)\text{.}%
\]

\end{proposition}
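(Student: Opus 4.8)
The plan is to sidestep a brute-force cochain expansion of $\delta_{\alpha}(i_{1}\omega)$ and instead exploit the Cartan-type operator attached to the representation $\rho_{\alpha}$. Write $\theta_{1}^{\alpha}=i_{1}\circ\delta_{\alpha}+\delta_{\alpha}\circ i_{1}$ for the Lie-derivative operator of the Lie-Rinehart algebra $(\mathcal{D}(M),\rho_{\alpha})$, so that the general identities recalled in the introduction specialize to $[\theta_{1}^{\alpha},i_{\psi}]=i_{[1,\psi]}$ and $\theta_{1}^{\alpha}a=[\rho_{\alpha}(1)](a)$ for $a\in C^{\infty}(M)$. The first and decisive observation is that the defining condition $\delta\omega=-\alpha\Lambda\omega$, combined with the formula $\delta_{\alpha}\eta=\delta\eta+\alpha\Lambda\eta$ of the earlier proposition, gives $\delta_{\alpha}\omega=\delta\omega+\alpha\Lambda\omega=0$. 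This single remark is what makes the whole computation collapse.

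Applying $\theta_{1}^{\alpha}=i_{1}\circ\delta_{\alpha}+\delta_{\alpha}\circ i_{1}$ to $\omega$ and using $\delta_{\alpha}\omega=0$, I would write $\delta_{\alpha}(i_{1}\omega)=\theta_{1}^{\alpha}\omega-i_{1}(\delta_{\alpha}\omega)=\theta_{1}^{\alpha}\omega$, so everything reduces to identifying $\theta_{1}^{\alpha}\omega$. Here I would use that $1\in\mathcal{D}(M)$ is the identity operator, whence the operator bracket $[1,\varphi]$ vanishes for every $\varphi\in\mathcal{D}(M)$; consequently $[\theta_{1}^{\alpha},i_{\varphi}]=i_{[1,\varphi]}=0$, i.e. $\theta_{1}^{\alpha}$ commutes with all interior products $i_{\varphi}$.

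Then, for any $\varphi,\psi\in\mathcal{D}(M)$, I slide $\theta_{1}^{\alpha}$ through the two interior products: $(\theta_{1}^{\alpha}\omega)(\varphi,\psi)=i_{\psi}i_{\varphi}(\theta_{1}^{\alpha}\omega)=\theta_{1}^{\alpha}(i_{\psi}i_{\varphi}\omega)=\theta_{1}^{\alpha}[\omega(\varphi,\psi)]$. On the function $\omega(\varphi,\psi)$ the operator acts as $\theta_{1}^{\alpha}a=[\rho_{\alpha}(1)](a)=[1+\alpha(1)]\cdot a$, since $\rho_{\alpha}(1)=1+\alpha(1)$ and $\alpha(1)\in\mathbb{R}$ by the proposition characterizing $\delta\alpha=(\delta1)\Lambda\alpha$. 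Hence $\theta_{1}^{\alpha}\omega=[1+\alpha(1)]\cdot\omega$, and combining with the previous reduction yields the claim $[1+\alpha(1)]\cdot\omega=\delta_{\alpha}(i_{1}\omega)$.

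The step I expect to need the most care is the evaluation of $\theta_{1}^{\alpha}\omega$: one must be certain that $[1,\varphi]=0$ so the curvature-type correction terms drop out, and that $\rho_{\alpha}(1)$ truly acts as multiplication by the scalar $1+\alpha(1)$. A more pedestrian alternative would expand $\delta_{\alpha}(i_{1}\omega)=\delta(i_{1}\omega)+\alpha\Lambda(i_{1}\omega)$ and feed in $i_{1}(\delta\omega)=-i_{1}(\alpha\Lambda\omega)=-\alpha(1)\cdot\omega+\alpha\Lambda(i_{1}\omega)$ through the graded-derivation property of $i_{1}$, together with $\theta_{1}\omega=\omega$ for the identity representation; there the delicate point is instead the sign bookkeeping in $i_{1}(\alpha\Lambda\omega)$, and the term $\alpha\Lambda(i_{1}\omega)$ cancels to give the same answer.
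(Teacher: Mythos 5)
Your proof is correct, and it takes a genuinely different route from the paper's. The paper proceeds by brute-force expansion: it evaluates $(\delta\omega)(1,\varphi,\psi)$ via the coboundary formula (using $\left[1,\varphi\right]=\left[1,\psi\right]=0$ to get $i_{1}\delta\omega=\omega-\delta(i_{1}\omega)$), separately expands $(-\alpha\Lambda\omega)(1,\varphi,\psi)=-\alpha(1)\cdot\omega+\alpha\Lambda(i_{1}\omega)$, equates the two through the hypothesis $\delta\omega=-\alpha\Lambda\omega$, and rearranges --- this is precisely the ``pedestrian alternative'' you sketch in your last paragraph, including the cancellation of the $\alpha\Lambda(i_{1}\omega)$ terms. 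Your main argument instead works entirely with $\delta_{\alpha}$ as a single object: the observation $\delta_{\alpha}\omega=\delta\omega+\alpha\Lambda\omega=0$, the homotopy formula $\theta_{1}^{\alpha}=i_{1}\circ\delta_{\alpha}+\delta_{\alpha}\circ i_{1}$, the commutation $\left[\theta_{1}^{\alpha},i_{\varphi}\right]=i_{\left[1,\varphi\right]}=0$, and the action on functions $\theta_{1}^{\alpha}a=\left[\rho_{\alpha}(1)\right](a)=\left[1+\alpha(1)\right]\cdot a$. This is legitimate because $(\mathcal{D}(M),\rho_{\alpha})$ is a Lie-Rinehart algebra (that is exactly what $\delta\alpha=(\delta1)\Lambda\alpha$ guarantees), so the Cartan identities recalled in the introduction apply with $d_{\rho}=\delta_{\alpha}$. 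What your route buys is conceptual clarity: it exhibits the statement as the Cartan formula applied to a $\delta_{\alpha}$-closed form, and explains where the factor $1+\alpha(1)$ comes from --- it is the Lie derivative $\theta_{1}^{\alpha}\omega$, i.e.\ the action of $\rho_{\alpha}(1)$. What the paper's route buys is self-containedness: it needs only the explicit cochain and wedge formulas, with all signs checked by hand. One cosmetic remark: you do not actually need $\alpha(1)\in\mathbb{R}$ for your key step, since $\rho_{\alpha}(1)=1+\alpha(1)$ acts as multiplication by the function $1+\alpha(1)$ whether or not it is constant; the constancy is available anyway from $\delta\alpha=(\delta1)\Lambda\alpha$, so nothing is at risk.
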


\begin{proof}
For any $\varphi,\psi\in\mathcal{D}(M)$, we have
\begin{align*}
(\delta\omega)(1,\varphi,\psi)  &  =\omega(\varphi,\psi)-\varphi\left[
(i_{1}\omega)(\psi)\right]  +\psi\left[  (i_{1}\omega)(\varphi)\right]
-\omega(\left[  1,\varphi\right]  ,\psi)\\
&  +\omega(\left[  1,\psi\right]  ,\varphi)-\omega(\left[  \varphi
,\psi\right]  ,1)\text{.}%
\end{align*}

As $\left[  1,\varphi\right]  =\left[  1,\psi\right]  =0$, we get
\begin{align*}
(\delta\omega)(1,\varphi,\psi)  &  =\omega(\varphi,\psi)-\varphi\left[
(i_{1}\omega)(\psi)\right]  +\psi\left[  (i_{1}\omega)(\varphi)\right]
+(i_{1}\omega)(\left[  \varphi,\psi\right]  )\\
&  =\left[  \omega-\delta(i_{1}\omega)\right]  (\varphi,\psi)\text{.}%
\end{align*}

On the other hand, we get%
\begin{align*}
(-\alpha\Lambda\omega)(1,\varphi,\psi))  &  =-\alpha(1)\cdot\omega
(\varphi,\psi)+\alpha(\varphi)\cdot(i_{1}\omega)(\psi)-\alpha(\psi)\cdot
(i_{1}\omega)(\varphi)\\
&  =\left[  -\alpha(1)\cdot\omega+\alpha\Lambda(i_{1}\omega)\right]
(\varphi,\psi)\text{.}%
\end{align*}

As $\delta\omega=-\alpha\Lambda\omega$, we conclude that
\[
\omega-\delta(i_{1}\omega)=-\alpha(1)\cdot\omega+\alpha\Lambda(i_{1}%
\omega)\text{.}%
\]

Thus%
\begin{align*}
\left[  1+\alpha(1)\right]  \cdot\omega &  =\delta(i_{1}\omega)+\alpha
\Lambda(i_{1}\omega)\\
&  =\delta_{\alpha}(i_{1}\omega)\text{.}%
\end{align*}

That ends the proof.
\end{proof}

As $\delta\alpha=(\delta1)\Lambda\alpha$, then $\alpha(1)\in%
\mathbb{R}
$.

If $\alpha(1)\neq-1$, we have
\[
\omega=\delta_{\alpha}\left[  i_{1}(\frac{1}{1+\alpha(1)}\cdot\omega)\right]
\text{.}%
\]

In this case, we will say that $M$ is an exact contact manifold since $\omega$
is $\delta_{\alpha}$-exact.

If $\alpha(1)=-1$, we will say that $M$ is a nonexact contact manifold.

\bigskip

\subsection{Structure of symplectic Lie-Rinehart-Jacobi algebra on
$\mathcal{D}(M)$ when $M$ is a contact manifold}

Let $M$ be a contact manifold with dimension $2n+1$. In this case, there
exists an $1$-form%
\[
\beta:\mathfrak{X}(M)\longrightarrow C^{\infty}(M)
\]
and a skew-symmetric $2$-form
\[
\Omega:\mathfrak{X}(M)\times\mathfrak{X}(M)\longrightarrow C^{\infty}(M)
\]
such that%
\[
\beta\Lambda\Omega^{n}%
\]
is a volume form on $M$.

Let $E$ be the fundamental vector field of the contact manifold $M$
\cite{lic1}. We have
\[
\beta(E)=1
\]
and%
\[
i_{E}\Omega=0\text{.}%
\]
We get%
\[
\mathfrak{X}(M)=Ker\beta\oplus C^{\infty}(M)\cdot E\text{.}%
\]
The restriction
\[
\Omega|_{Ker\beta\times Ker\beta}:Ker\beta\times Ker\beta\longrightarrow
C^{\infty}(M)
\]
is a nondegenerate skew-symmetric bilinear form on $Ker\beta$ \cite{lic1}.

Thus, we have
\[
\mathcal{D}(M)=C^{\infty}(M)\oplus Ker\beta\oplus C^{\infty}(M)\cdot E\text{.}%
\]
If
\[
\pi:\mathcal{D}(M)\longrightarrow\mathfrak{X}(M)
\]
is the canonical surjection, the linear form
\[
\widetilde{\beta}=\beta\circ\pi:\mathcal{D}(M)\longrightarrow C^{\infty}(M)
\]
is such that 
\[
\widetilde{\beta}|_{C^{\infty}(M)}=0
\]
and
\[
\widetilde{\beta}|_{\mathfrak{X}(M)}=\beta\text{.}%
\]

For any $\varphi,$ $\psi$ two elements of $\mathcal{D}(M)$, we have
\begin{align*}
\varphi &  =\varphi(1)+X+\widetilde{\beta}(\varphi)\cdot E\\
\psi &  =\psi(1)+Y+\widetilde{\beta}(\psi)\cdot E
\end{align*}
where $X,Y\in Ker\beta$. The map%
\[
\overline{\Omega}:\mathcal{D}(M)\times\mathcal{D}(M)\longrightarrow C^{\infty
}(M),(\varphi,\psi)\longmapsto\Omega(X,Y),
\]
is $C^{\infty}(M)$-bilinear and skew-symmetric.

The map
\[
\widetilde{\Omega}=\overline{\Omega}+(\delta1)\Lambda\widetilde{\beta
}:\mathcal{D}(M)\times\mathcal{D}(M)\longrightarrow C^{\infty}(M)
\]
is a skew-symmetric bilinear form.

\begin{proposition}
We get%
\begin{align*}
i_{1}\overline{\Omega}  &  =0\text{;}\\
i_{1}\widetilde{\Omega}  &  =\widetilde{\beta}\text{;}\\
i_{E}\widetilde{\Omega}  &  =-\delta1\text{.}%
\end{align*}

\end{proposition}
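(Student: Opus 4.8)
The plan is to verify each of the three identities by evaluating both sides on an arbitrary element $\psi=\psi(1)+Y+\widetilde{\beta}(\psi)\cdot E$ of $\mathcal{D}(M)$ (with $Y\in Ker\,\beta$), using the explicit decompositions of the two distinguished arguments $1$ and $E$ together with the definitions of $\overline{\Omega}$ and $\widetilde{\Omega}$. The essential preliminary observations are the following. The constant operator $1\in C^{\infty}(M)$ has vanishing $Ker\,\beta$-component and satisfies $\widetilde{\beta}(1)=0$ and $(\delta1)(1)=1(1)=1$. The fundamental vector field $E$ also has vanishing $Ker\,\beta$-component: writing $E=E(1)+X_{E}+\widetilde{\beta}(E)\cdot E$ with $E(1)=0$ (since $E$ is a vector field) and $\widetilde{\beta}(E)=\beta(E)=1$ forces $X_{E}=0$; moreover $(\delta1)(E)=E(1)=0$. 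Finally, as already recorded in the excerpt, $(\delta1)(\varphi)=\varphi(1)$ for every $\varphi\in\mathcal{D}(M)$.

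For the first identity $i_{1}\overline{\Omega}=0$: since $\overline{\Omega}(\varphi,\psi)=\Omega(X,Y)$ depends only on the $Ker\,\beta$-components $X,Y$ of its arguments, and the $Ker\,\beta$-component of $1$ is zero, I get $\overline{\Omega}(1,\psi)=\Omega(0,Y)=0$ for every $\psi$, whence $i_{1}\overline{\Omega}=0$.

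For the second and third identities I expand $\widetilde{\Omega}=\overline{\Omega}+(\delta1)\Lambda\widetilde{\beta}$ and use the wedge formula $(\sigma\Lambda\tau)(a,b)=\sigma(a)\tau(b)-\sigma(b)\tau(a)$ valid for $1$-forms. For $i_{1}\widetilde{\Omega}$ the $\overline{\Omega}$-term vanishes by the first identity, while $((\delta1)\Lambda\widetilde{\beta})(1,\psi)=(\delta1)(1)\cdot\widetilde{\beta}(\psi)-(\delta1)(\psi)\cdot\widetilde{\beta}(1)=\widetilde{\beta}(\psi)$, using $(\delta1)(1)=1$ and $\widetilde{\beta}(1)=0$; hence $i_{1}\widetilde{\Omega}=\widetilde{\beta}$. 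For $i_{E}\widetilde{\Omega}$, again $\overline{\Omega}(E,\psi)=0$ because $E$ has zero $Ker\,\beta$-component, and $((\delta1)\Lambda\widetilde{\beta})(E,\psi)=(\delta1)(E)\cdot\widetilde{\beta}(\psi)-(\delta1)(\psi)\cdot\widetilde{\beta}(E)=-(\delta1)(\psi)$, using $(\delta1)(E)=0$ and $\widetilde{\beta}(E)=1$; hence $i_{E}\widetilde{\Omega}=-\delta1$.

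No step presents a genuine obstacle; the proposition is a direct consequence of the definitions. The only point requiring care is the bookkeeping of the three-term decomposition $\mathcal{D}(M)=C^{\infty}(M)\oplus Ker\,\beta\oplus C^{\infty}(M)\cdot E$ --- in particular correctly reading off that both $1$ and $E$ lie entirely outside the $Ker\,\beta$-summand, so that every $\overline{\Omega}$-contribution drops out, and that $\widetilde{\beta}$ distinguishes them via $\widetilde{\beta}(1)=0$ against $\widetilde{\beta}(E)=1$. Once these values and the identity $(\delta1)(\varphi)=\varphi(1)$ are in hand, each of the three equalities follows by a one-line evaluation.
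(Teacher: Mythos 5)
Your proof is correct: the paper itself dismisses this proposition with ``It is obvious,'' and your computation simply supplies the routine details that the author left implicit, using exactly the intended approach (evaluate against the decomposition $\mathcal{D}(M)=C^{\infty}(M)\oplus Ker\,\beta\oplus C^{\infty}(M)\cdot E$, note that $1$ and $E$ have zero $Ker\,\beta$-component, and expand $(\delta1)\Lambda\widetilde{\beta}$). In particular your key evaluations $(\delta1)(1)=1$, $(\delta1)(E)=0$, $\widetilde{\beta}(1)=0$, $\widetilde{\beta}(E)=1$ match the conventions the paper uses later in the nondegeneracy proof, so nothing further is needed.
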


\begin{proof}
It is obvious.
\end{proof}

\begin{proposition}
The skew-symmetric bilinear form
\[
\widetilde{\Omega}=\overline{\Omega}+(\delta1)\Lambda\widetilde{\beta
}:\mathcal{D}(M)\times\mathcal{D}(M)\longrightarrow C^{\infty}(M)
\]
is nondegenerated.
\end{proposition}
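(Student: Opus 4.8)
The plan is to show that the induced $C^{\infty}(M)$-module morphism $\mathcal{D}(M)\longrightarrow\mathcal{D}(M)^{\ast}$, $\varphi\longmapsto i_{\varphi}\widetilde{\Omega}$, is an isomorphism, which is exactly what nondegeneracy means here. The guiding idea is that, relative to the direct sum $\mathcal{D}(M)=C^{\infty}(M)\oplus Ker\beta\oplus C^{\infty}(M)\cdot E$, the form $\widetilde{\Omega}$ block-decomposes: the plane spanned by $1$ and $E$ is a hyperbolic plane, the subspace $Ker\beta$ carries the already-nondegenerate form $\Omega|_{Ker\beta\times Ker\beta}$, and the two blocks do not couple. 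I would therefore reduce the nondegeneracy of $\widetilde{\Omega}$ to the nondegeneracy of $\Omega$ on $Ker\beta$ established earlier.

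Concretely, I would first record the identities of the preceding proposition, $i_{1}\overline{\Omega}=0$, $i_{1}\widetilde{\Omega}=\widetilde{\beta}$ and $i_{E}\widetilde{\Omega}=-\delta1$, together with $\beta(E)=1$, $\widetilde{\beta}|_{C^{\infty}(M)}=0$, $\widetilde{\beta}|_{\mathfrak{X}(M)}=\beta$ and $(\delta1)(\psi)=\psi(1)$. Writing an arbitrary element as $\varphi=\varphi(1)+X+\widetilde{\beta}(\varphi)\cdot E$ with $X\in Ker\beta$, and evaluating $i_{\varphi}\widetilde{\Omega}$ in turn on $1$, on $E$, and on $Y\in Ker\beta$, I expect to obtain $(i_{\varphi}\widetilde{\Omega})(1)=-\widetilde{\beta}(\varphi)$, $(i_{\varphi}\widetilde{\Omega})(E)=\varphi(1)$, and $(i_{\varphi}\widetilde{\Omega})(Y)=\Omega(X,Y)$. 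The decisive observation is that the cross terms vanish: $\widetilde{\Omega}(1,Y)=\widetilde{\beta}(Y)=\beta(Y)=0$ and $\widetilde{\Omega}(E,Y)=-(\delta1)(Y)=-Y(1)=0$ for $Y\in Ker\beta$, and $\widetilde{\Omega}(X,Y)=\overline{\Omega}(X,Y)=\Omega(X,Y)$ since the $\delta1$ and $\widetilde{\beta}$ corrections both vanish on $Ker\beta$.

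These three evaluations give injectivity and surjectivity simultaneously. For any $\eta\in\mathcal{D}(M)^{\ast}$, an element $\varphi$ with $i_{\varphi}\widetilde{\Omega}=\eta$ is forced to satisfy $\varphi(1)=\eta(E)$ and $\widetilde{\beta}(\varphi)=-\eta(1)$, while its $Ker\beta$-component $X$ must solve $\Omega(X,\cdot)=\eta|_{Ker\beta}$; nondegeneracy of $\Omega|_{Ker\beta\times Ker\beta}$ determines $X$ uniquely. Conversely, defining $\varphi$ by these prescriptions and using $C^{\infty}(M)$-linearity of $i_{\varphi}\widetilde{\Omega}$, it suffices to check agreement with $\eta$ on the generating summands $C^{\infty}(M)$, $Ker\beta$ and $C^{\infty}(M)\cdot E$, which the three evaluations supply. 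Hence $\varphi\longmapsto i_{\varphi}\widetilde{\Omega}$ is bijective, and $\widetilde{\Omega}$ is nondegenerate.

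The step I expect to be the main obstacle is not the computation but the module-theoretic bookkeeping: over $C^{\infty}(M)$ one must produce an honest inverse rather than merely a trivial radical, so I would be careful to verify that $\eta|_{Ker\beta}$ genuinely lies in the range of $X\longmapsto\Omega(X,\cdot)$ and that the three summands really generate $\mathcal{D}(M)$, since that is the only place where the contact hypothesis (through nondegeneracy of $\Omega$ on $Ker\beta$) is essentially used.
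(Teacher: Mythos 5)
Your proposal is correct and follows essentially the same route as the paper: the same decomposition $\mathcal{D}(M)=C^{\infty}(M)\oplus Ker\beta\oplus C^{\infty}(M)\cdot E$, the same three evaluations against $1$, $E$ and $Y\in Ker\beta$ (with matching signs), and the same explicit preimage $\varphi=\eta(E)+X-\eta(1)\cdot E$ for surjectivity. Your closing caution about $\eta|_{Ker\beta}$ lying in the range of $X\longmapsto i_{X}\Omega$ is exactly the point the paper settles by citing the nondegeneracy of $\Omega|_{Ker\beta\times Ker\beta}$ as an isomorphism onto $(Ker\beta)^{\ast}$, so nothing is missing.
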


\begin{proof}
Let $\varphi\in\mathcal{D}(M)$ such that $\widetilde{\Omega}(\varphi,\psi)=0$
for any $\psi\in\mathcal{D}(M)$. We can write $\varphi=\varphi(1)+X+\widetilde
{\beta}(\varphi)\cdot E$ with $X\in Ker\beta$.

For $\psi=1$, we get%
\begin{align*}
0  &  =\widetilde{\Omega}(\varphi,1)\\
&  =\overline{\Omega}(\varphi,1)+\varphi(1)\cdot\widetilde{\beta}%
(1)-1\cdot\widetilde{\beta}(\varphi)\\
&  =-(i_{1}\overline{\Omega})(\varphi)-\widetilde{\beta}(\varphi)\\
&  =-\widetilde{\beta}(\varphi)\text{.}%
\end{align*}

Thus $\widetilde{\beta}(\varphi)=0$.

For $\psi=E$, we get
\begin{align*}
0  &  =\widetilde{\Omega}(\varphi,E)\\
&  =-(i_{E}\widetilde{\Omega})(\varphi)\\
&  =(\delta1)(\varphi)\\
&  =\varphi(1)\text{.}%
\end{align*}

Thus $\varphi(1)=0$.

As $\widetilde{\beta}(\varphi)=0$ and $\varphi(1)=0$, we have $\varphi=X$.
Thus for any $\psi=Y\in Ker\beta$, we get
\begin{align*}
0  &  =\widetilde{\Omega}(\varphi,Y)\\
&  =\widetilde{\Omega}(X,Y)\\
&  =\Omega(X,Y)\text{.}%
\end{align*}

As
\[
\Omega|_{Ker\beta\times Ker\beta}:Ker\beta\times Ker\beta\longrightarrow
C^{\infty}(M)
\]
is nondegenerated, we deduce that $X=0$. We conclude that $\varphi=0$ and the
map
\[
\mathcal{D}(M)\longrightarrow\mathcal{D}(M)^{\ast},\varphi\longmapsto
i_{\varphi}\widetilde{\Omega},
\]
is injective.

The map
\[
\mathcal{D}(M)\longrightarrow\mathcal{D}(M)^{\ast},\varphi\longmapsto
i_{\varphi}\widetilde{\Omega},
\]
is also surjective since if
\[
\nu:\mathcal{D}(M)\longrightarrow C^{\infty}(M)
\]
is a linear form on $\mathcal{D}(M)$ and if $X$ is the unique element of
$Ker\beta$ such that $i_{X}\Omega=\nu|_{Ker\beta}$, the differential operator
\[
\varphi=\nu(E)+X-\nu(1)\cdot E
\]
is such that
\[
i_{\varphi}\widetilde{\Omega}=\nu\text{.}%
\]
Thus
\[
\widetilde{\Omega}:\mathcal{D}(M)\times\mathcal{D}(M)\longrightarrow
C^{\infty}(M)
\]
is a nondegenerate skew-symmetric bilinear form.
\end{proof}

In what follows, we give the characterization of a contact manifold in terms
of symplectic Lie-Rinehart-Jacobi algebra structure on $\mathcal{D}(M)$.

We consider the linear form
\[
\alpha=\left[  1+\alpha(1)\right]  \cdot\delta1+i_{E}\delta\widetilde{\beta
}+\alpha(E)\cdot\widetilde{\beta}:\mathcal{D}(M)\longrightarrow C^{\infty}(M)
\]
on $\mathcal{D}(M)$ with
\[
\alpha(1)\in%
\mathbb{R}
\text{ and }d\left[  \alpha(E)\cdot\beta+i_{E}d\beta\right]  =0\text{.}%
\]
In this case, we have
\[
\delta\alpha=(\delta1)\Lambda\alpha\text{.}%
\]
We have the following properties:

\begin{proposition}
We get

\begin{enumerate}
\item $\left[  1+\alpha(1)\right]  \cdot\widetilde{\Omega}=\delta
\widetilde{\beta}+\alpha\Lambda\widetilde{\beta};$

\item $\alpha(E)\cdot\widetilde{\Omega}=(\delta1)\Lambda\alpha-i_{E}%
\delta\widetilde{\Omega};$

\item $\beta\left[  X,E\right]  \cdot\widetilde{\Omega}=\alpha\Lambda
i_{X}\widetilde{\Omega}-i_{X}\delta\widetilde{\Omega},$ for any $X\in
Ker\beta$.
\end{enumerate}
\end{proposition}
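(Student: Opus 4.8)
The plan is to deduce all three identities from the single structure equation $\delta\widetilde{\Omega}=-\alpha\Lambda\widetilde{\Omega}$ (equivalently $\delta_{\alpha}\widetilde{\Omega}=0$), together with the already-proved identities $i_{1}\widetilde{\Omega}=\widetilde{\beta}$ and $i_{E}\widetilde{\Omega}=-\delta1$ and a one-line computation of $\alpha$ on $Ker\beta$. Because every form occurring is $C^{\infty}(M)$-multilinear, it suffices to test each equality on arguments drawn from the three summands of $\mathcal{D}(M)=C^{\infty}(M)\oplus Ker\beta\oplus C^{\infty}(M)\cdot E$.

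First I would record the auxiliary identity $\alpha(X)=\beta[X,E]$ for $X\in Ker\beta$. This is immediate from the definition of $\alpha$: on such an $X$ one has $(\delta1)(X)=X(1)=0$ and $\widetilde{\beta}(X)=\beta(X)=0$, so only the middle term survives, and $(i_{E}\delta\widetilde{\beta})(X)=(\delta\widetilde{\beta})(E,X)=-\widetilde{\beta}[E,X]=\beta[X,E]$ since $\widetilde{\beta}(E)=1$ is constant. This is exactly the function appearing in the third identity.

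Next I would establish the structure equation $\delta\widetilde{\Omega}=-\alpha\Lambda\widetilde{\Omega}$ by testing the two $3$-forms on triples of generators. Triples involving two of the vertical generators $1,E$ reduce, via $i_{1}\widetilde{\Omega}=\widetilde{\beta}$, $i_{E}\widetilde{\Omega}=-\delta1$, $[1,\varphi]=0$ and the helper $\alpha(X)=\beta[X,E]$, to identities that hold automatically. The content lies in the purely horizontal triples $(1,X,Y)$, $(E,X,Y)$ and $(X,Y,Z)$ with $X,Y,Z\in Ker\beta$: on $(1,X,Y)$ the equation becomes the normalization $[1+\alpha(1)]\Omega=d\beta$ on $Ker\beta$, on $(E,X,Y)$ it becomes $\theta_{E}\widetilde{\Omega}=-\alpha(E)\widetilde{\Omega}$ there, and on $(X,Y,Z)$ it becomes the locally conformal symplectic relation $d\Omega=-\alpha\Lambda\Omega$ on the contact distribution (here $\widetilde{\Omega}$ restricts to $\Omega$ and $\delta$ to the de Rham differential). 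Verifying these three relations from the contact data $(\beta,\Omega,E)$ is the genuine geometric input and the step I expect to be the main obstacle; everything else is bookkeeping with the graded-commutator identities recalled in the introduction.

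Finally the three stated identities drop out as contractions of the structure equation. Since $i_{1}\widetilde{\Omega}=\widetilde{\beta}$, Cartan's formula $\theta_{1}=i_{1}\delta+\delta i_{1}$ together with $[1,\varphi]=0$ and the fact that $1$ acts as the identity on $C^{\infty}(M)$ give $\theta_{1}\widetilde{\Omega}=\widetilde{\Omega}$, whence $\delta\widetilde{\beta}+\alpha\Lambda\widetilde{\beta}=\delta_{\alpha}(i_{1}\widetilde{\Omega})=\theta_{1}\widetilde{\Omega}-i_{1}\delta\widetilde{\Omega}+\alpha\Lambda i_{1}\widetilde{\Omega}=[1+\alpha(1)]\widetilde{\Omega}$ after substituting $\delta\widetilde{\Omega}=-\alpha\Lambda\widetilde{\Omega}$; this is the first identity. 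Contracting the structure equation with $i_{E}$ and using $i_{E}\widetilde{\Omega}=-\delta1$ yields $i_{E}\delta\widetilde{\Omega}=-\alpha(E)\widetilde{\Omega}+(\delta1)\Lambda\alpha$, which is the second; contracting with $i_{X}$ for $X\in Ker\beta$ yields $i_{X}\delta\widetilde{\Omega}=-\alpha(X)\widetilde{\Omega}+\alpha\Lambda i_{X}\widetilde{\Omega}$, which, upon replacing $\alpha(X)$ by $\beta[X,E]$, is the third. As a shortcut one can instead prove the first identity directly on generators and then apply $\delta_{\alpha}$ (which squares to zero because $\delta\alpha=(\delta1)\Lambda\alpha$) to recover $\delta_{\alpha}\widetilde{\Omega}=0$ whenever $\alpha(1)\neq-1$, the value $\alpha(1)=-1$ being the only one that would then still require the direct horizontal check.
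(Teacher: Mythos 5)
Your derivations of the three identities \emph{from} the structure equation $\delta\widetilde{\Omega}=-\alpha\Lambda\widetilde{\Omega}$ are correct: the computation $\alpha(X)=\beta\left[X,E\right]$ for $X\in Ker\beta$, the fact $\theta_{1}\widetilde{\Omega}=\widetilde{\Omega}$, and the contractions by $i_{1}$, $i_{E}$, $i_{X}$ all check out. The gap is that you never prove the structure equation itself, and in this paper you cannot simply quote it: it is Theorem 18, which comes \emph{after} this proposition and whose proof consists precisely of invoking these three identities to show $i_{\varphi}(\alpha\Lambda\widetilde{\Omega}+\delta\widetilde{\Omega})=0$. So your route is circular unless you supply an independent proof of $\delta\widetilde{\Omega}=-\alpha\Lambda\widetilde{\Omega}$, and that is exactly the step you defer, calling the horizontal checks on $(1,X,Y)$, $(E,X,Y)$, $(X,Y,Z)$ ``the genuine geometric input'' and ``the main obstacle'' without carrying them out. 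Those checks are not auxiliary bookkeeping: evaluated on $(1,X,Y)$ with $X,Y\in Ker\beta$, the structure equation reads $\left[1+\alpha(1)\right]\Omega(X,Y)=(d\beta)(X,Y)$, which \emph{is} identity 1 restricted to its only nontrivial arguments (on pairs involving $1$ or $E$, identity 1 reduces to trivialities via $\widetilde{\beta}(E)=1$ and $\alpha(X)=\beta\left[X,E\right]$), and similarly for the $(E,X,Y)$ and $(X,Y,Z)$ checks versus identities 2 and 3. In effect you have reduced the proposition to an equivalent unproven statement, i.e.\ postponed the proof rather than given it.

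The paper's own proof goes the other way and makes no use of the structure equation: it posits that each right-hand side is a functional multiple of $\widetilde{\Omega}$ (justified by pointwise regularity of the matrix of $\widetilde{\Omega}(x)$), writes $f\cdot\widetilde{\Omega}$, $g\cdot\widetilde{\Omega}$, $h_{X}\cdot\widetilde{\Omega}$ for the three right-hand sides, and then determines the coefficients by applying the double contraction $i_{E}i_{1}$ and using $i_{E}\left[i_{1}\widetilde{\Omega}\right]=1$, obtaining $f=1+\alpha(1)$, $g=\alpha(E)$, $h_{X}=\alpha(X)=\beta\left[X,E\right]$. Your coefficient extraction by contraction echoes that second half, but the proportionality step (or, in your version, the horizontal verification from the contact data $(\beta,\Omega,E)$) is where all the geometric content sits, and it is missing. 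Your proposed shortcut has the same defect --- ``prove the first identity directly on generators'' is again the unperformed horizontal check --- and in addition it breaks down precisely when $\alpha(1)=-1$, the nonexact case that is the main object of this paper.
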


\begin{proof}
For any $x\in M$, as the matrix of $\widetilde{\Omega}(x)$ is regular, then
there exists $f\in C^{\infty}(M)$, $g\in C^{\infty}(M)$ and $h_{X}\in
C^{\infty}(M)$ for any $X\in Ker\beta$ such that

$1/f\cdot\widetilde{\Omega}=\delta\widetilde{\beta}+\alpha\Lambda
\widetilde{\beta}$ ,

$2/g\cdot\widetilde{\Omega}=(\delta1)\Lambda\alpha-i_{E}\delta\widetilde
{\Omega},$

$3/h_{X}\cdot\widetilde{\Omega}=\alpha\Lambda i_{X}\widetilde{\Omega}%
-i_{X}\delta\widetilde{\Omega}$.

We deduce the following equations:

$a/f\cdot i_{E}\left[  i_{1}\widetilde{\Omega}\right]  $ $=i_{E}\left[
i_{1}(\delta\widetilde{\beta}+\alpha\Lambda\widetilde{\beta})\right]  $,

$b/g\cdot i_{E}\left[  i_{1}\widetilde{\Omega}\right]  =i_{E}\left[
i_{1}((\delta1)\Lambda\alpha-i_{E}\delta\widetilde{\Omega})\right]  ,$

$c/h_{X}\cdot i_{E}\left[  i_{1}\widetilde{\Omega}\right]  $ $=i_{E}\left[
i_{1}(\alpha\Lambda i_{X}\widetilde{\Omega}-i_{X}\delta\widetilde{\Omega
})\right]  $.

As $i_{E}\left[  i_{1}\widetilde{\Omega}\right]  =1$, we verify that the
unique solutions are: $f=1+\alpha(1);$ $g=\alpha(E)$ and $h_{X}=\alpha
(X)=\beta\left[  X,E\right]  $ for any $X\in Ker\beta$.
\end{proof}

\begin{theorem}
We have
\[
\delta\widetilde{\Omega}=-\alpha\Lambda\widetilde{\Omega}\text{..}%
\]

\end{theorem}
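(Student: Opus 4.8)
The plan is to set $\Theta=\delta\widetilde{\Omega}+\alpha\Lambda\widetilde{\Omega}$ (equivalently $\Theta=\delta_{\alpha}\widetilde{\Omega}$ by the Proposition giving $\delta_{\alpha}\eta=\delta\eta+\alpha\Lambda\eta$) and to show that this $3$-form on $\mathcal{D}(M)$ vanishes. Since $\Theta$ is $C^{\infty}(M)$-multilinear and, by the decomposition $\mathcal{D}(M)=C^{\infty}(M)\oplus Ker\beta\oplus C^{\infty}(M)\cdot E$, every element of $\mathcal{D}(M)$ is a $C^{\infty}(M)$-combination of $1$, of $E$, and of elements of $Ker\beta$, it suffices to verify that $i_{1}\Theta=0$, that $i_{E}\Theta=0$, and that $i_{X}\Theta=0$ for every $X\in Ker\beta$. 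The three assertions of the preceding Proposition are tailored precisely to these three contractions, so the whole argument reduces to feeding them into the antiderivation rule $i_{\varphi}(\alpha\Lambda\widetilde{\Omega})=\alpha(\varphi)\cdot\widetilde{\Omega}-\alpha\Lambda i_{\varphi}\widetilde{\Omega}$ together with the identities $i_{1}\widetilde{\Omega}=\widetilde{\beta}$ and $i_{E}\widetilde{\Omega}=-\delta1$ already established.

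First I would treat the two easy contractions. For $i_{E}\Theta$, the antiderivation rule and $i_{E}\widetilde{\Omega}=-\delta1$ give $i_{E}(\alpha\Lambda\widetilde{\Omega})=\alpha(E)\cdot\widetilde{\Omega}-(\delta1)\Lambda\alpha$, and assertion $2$ of the Proposition, rewritten as $i_{E}\delta\widetilde{\Omega}=(\delta1)\Lambda\alpha-\alpha(E)\cdot\widetilde{\Omega}$, cancels this exactly, so $i_{E}\Theta=0$. For $X\in Ker\beta$, the rule gives $i_{X}(\alpha\Lambda\widetilde{\Omega})=\alpha(X)\cdot\widetilde{\Omega}-\alpha\Lambda i_{X}\widetilde{\Omega}$, while assertion $3$ reads $i_{X}\delta\widetilde{\Omega}=\alpha\Lambda i_{X}\widetilde{\Omega}-\beta[X,E]\cdot\widetilde{\Omega}$; adding these and invoking the identity $\alpha(X)=\beta[X,E]$ (which was obtained in the proof of the preceding Proposition, using $(\delta1)(X)=0$ and $\widetilde{\beta}(X)=0$) makes everything cancel, so $i_{X}\Theta=0$.

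The delicate step, and the one I expect to be the main obstacle, is $i_{1}\Theta=0$, because assertion $1$ controls $\delta\widetilde{\beta}=\delta(i_{1}\widetilde{\Omega})$ rather than $i_{1}\delta\widetilde{\Omega}$ directly; the two are bridged by the Cartan-type identity $i_{1}\delta\widetilde{\Omega}=\theta_{1}\widetilde{\Omega}-\delta(i_{1}\widetilde{\Omega})=\theta_{1}\widetilde{\Omega}-\delta\widetilde{\beta}$, where $\theta_{1}=[i_{1},\delta]$ for the identity representation. Combining this with $i_{1}(\alpha\Lambda\widetilde{\Omega})=\alpha(1)\cdot\widetilde{\Omega}-\alpha\Lambda\widetilde{\beta}$ and substituting assertion $1$ in the form $\delta\widetilde{\beta}=[1+\alpha(1)]\cdot\widetilde{\Omega}-\alpha\Lambda\widetilde{\beta}$ collapses $i_{1}\Theta$ to $\theta_{1}\widetilde{\Omega}-\widetilde{\Omega}$. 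It then remains to observe that $\theta_{1}\widetilde{\Omega}=\widetilde{\Omega}$: since $[1,\varphi]=\varphi(1)-1\cdot\varphi(1)=0$ for every $\varphi\in\mathcal{D}(M)$ and since $1$ acts as the identity operator so that $\theta_{1}a=a$ for every $a\in C^{\infty}(M)$, the formula $(\theta_{1}\widetilde{\Omega})(\varphi,\psi)=\theta_{1}[\widetilde{\Omega}(\varphi,\psi)]-\widetilde{\Omega}([1,\varphi],\psi)-\widetilde{\Omega}(\varphi,[1,\psi])$ reduces to $\widetilde{\Omega}(\varphi,\psi)$. Hence $i_{1}\Theta=0$, and the three vanishing contractions force $\Theta=0$, which is the claimed identity $\delta\widetilde{\Omega}=-\alpha\Lambda\widetilde{\Omega}$. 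I note in passing that when $\alpha(1)\neq-1$ one can instead apply $\delta_{\alpha}$ to assertion $1$ and use $\delta_{\alpha}^{2}=0$ to get $[1+\alpha(1)]\cdot\delta_{\alpha}\widetilde{\Omega}=\delta_{\alpha}^{2}\widetilde{\beta}=0$ at once; but since this shortcut breaks down in the nonexact case $\alpha(1)=-1$, the contraction argument above is preferable for its uniformity.
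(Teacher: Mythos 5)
Your proof is correct and follows essentially the same route as the paper: both reduce $\delta\widetilde{\Omega}+\alpha\Lambda\widetilde{\Omega}=0$ to the three identities of the preceding Proposition via the decomposition $\mathcal{D}(M)=C^{\infty}(M)\oplus Ker\beta\oplus C^{\infty}(M)\cdot E$, the only organizational difference being that the paper contracts with one general decomposed $\varphi$ in a single computation while you contract with $1$, $E$ and $X\in Ker\beta$ separately. Your derivation of $i_{1}\delta\widetilde{\Omega}=\widetilde{\Omega}-\delta\widetilde{\beta}$ from the Cartan identity and $\theta_{1}\widetilde{\Omega}=\widetilde{\Omega}$ supplies a detail the paper merely recalls, and your sign and bracket computations (including $\alpha(X)=\beta\left[  X,E\right]$) all check out.
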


\begin{proof}
We recall that%
\begin{align*}
i_{1}\widetilde{\Omega}  &  =\widetilde{\beta};\\
i_{E}\widetilde{\Omega}  &  =-\delta1;\\
i_{1}\delta\widetilde{\Omega}  &  =\widetilde{\Omega}-\delta\widetilde{\beta
}\text{.}%
\end{align*}

For any $\varphi\in\mathcal{D}(M)$ with $\varphi=\varphi(1)+X+\widetilde
{\beta}(\varphi)\cdot E,(X\in Ker\beta)$, we have%
\begin{align*}
i_{\varphi}(\alpha\Lambda\widetilde{\Omega}+\delta\widetilde{\Omega}) &
=\alpha(\varphi)\cdot\widetilde{\Omega}-\alpha\Lambda i_{\varphi}%
\widetilde{\Omega}+i_{\varphi}\delta\widetilde{\Omega}\\
&  =\left[  \varphi(1)\cdot\alpha(1)+\alpha(X)+\widetilde{\beta}(\varphi
)\cdot\alpha(E)\right]  \cdot\widetilde{\Omega}\\
&  -\alpha\Lambda\left[  \varphi(1)\cdot i_{1}\widetilde{\Omega}%
+i_{X}\widetilde{\Omega}+\widetilde{\beta}(\varphi)\cdot i_{E}\widetilde
{\Omega}\right]  \\
&  +\varphi(1)\cdot i_{1}\delta\widetilde{\Omega}+i_{X}\delta\widetilde
{\Omega}+\widetilde{\beta}(\varphi)\cdot i_{E}\delta\widetilde{\Omega}\text{.}%
\end{align*}

We get
\begin{align*}
i_{\varphi}(\alpha\Lambda\widetilde{\Omega}+\delta\widetilde{\Omega})  &
=\left[  \varphi(1)\cdot\alpha(1)+\alpha(X)+\widetilde{\beta}(\varphi
)\cdot\alpha(E)\right]  \cdot\widetilde{\Omega}\\
&  -\alpha\Lambda\left[  \varphi(1)\cdot\widetilde{\beta}+i_{X}\widetilde
{\Omega}-\widetilde{\beta}(\varphi)\cdot\delta1\right] \\
&  +\varphi(1)\cdot(\widetilde{\Omega}-\delta\widetilde{\beta})+i_{X}%
\delta\widetilde{\Omega}+\widetilde{\beta}(\varphi)\cdot i_{E}\delta
\widetilde{\Omega}\text{.}%
\end{align*}

We also have%
\begin{align*}
i_{\varphi}(\alpha\Lambda\widetilde{\Omega}+\delta\widetilde{\Omega})  &
=\varphi(1)\cdot(\left[  1+\alpha(1)\right]  \cdot\widetilde{\Omega}%
-\delta\widetilde{\beta}-\alpha\Lambda\widetilde{\beta})\\
&  +\left[  \alpha(X)\cdot\widetilde{\Omega}-\alpha\Lambda i_{X}%
\widetilde{\Omega}+i_{X}\delta\widetilde{\Omega}\right] \\
&  +\widetilde{\beta}(\varphi)\cdot\left[  \alpha(E)\cdot\widetilde{\Omega
}-(\delta1)\Lambda\alpha+i_{E}\delta\widetilde{\Omega}\right]  \text{.}%
\end{align*}

Proposition $17$, above, implies that%
\[
i_{\varphi}(\alpha\Lambda\widetilde{\Omega}+\delta\widetilde{\Omega
})=0\text{.}%
\]

As
\[
i_{\varphi}(\alpha\Lambda\widetilde{\Omega}+\delta\widetilde{\Omega})=0,
\]
then, for any $\varphi,\psi,\eta\in\mathcal{D}(M)$ we have
\[
(\alpha\Lambda\widetilde{\Omega}+\delta\widetilde{\Omega})(\varphi,\psi
,\eta)=0\text{.}%
\]

We conclude that
\[
\delta\widetilde{\Omega}=-\alpha\Lambda\widetilde{\Omega}\text{.}%
\]

That ends the proof.
\end{proof}

\begin{corollary}
\bigskip If $M$ is a contact manifold, then $\mathcal{D}(M)$ admits a
symplectic Lie-Rinehart-Jacobi algebra structure.
\end{corollary}

In this paper we showed that $\mathcal{D}(M)$ admits a symplectic
Lie-Rinehart-Jacobi algebra structure if and only if $M$ \ is a contact
manifold. Thus a contact structure on a manifold $M$ is due to the existence
of a $C^{\infty}(M)$-linear form%
\[
\alpha:\mathcal{D}(M)\longrightarrow C^{\infty}(M)
\]
and a nondegenerate skew-symmetric bilinear form%
\[
\omega:\mathcal{D}(M)\times\mathcal{D}(M)\longrightarrow C^{\infty}(M)
\]
such that

\begin{enumerate}
\item $\delta\alpha=(\delta1)\Lambda\alpha$;

\item $\delta\omega=-\alpha\Lambda\omega$.
\end{enumerate}

If $\alpha(1)\neq-1$, we will say that $M$ is an exact contact manifold and if
$\alpha(1)=-1$, we will say that $M$ is a nonexact contact manifold.

Thus the parallelism between locally conformal symplectic manifolds and
contact manifolds is obvious: a locally conformal symplectic structure on a
manifold $M$ is due to the existence of a symplectic Lie-Rinehart-Jacobi
algebra structure on $\mathfrak{X}(M)$ whereas a contact structure on a
manifold $M$ is due to the existence of a symplectic Lie-Rinehart-Jacobi
algebra structure on $\mathcal{D}(M)$.


\begin{thebibliography}{9}                                                                                                %


\bibitem {bou1}BOURBAKI \ N., Alg\`{e}bre, chapitres 1 \`{a} 3, Hermann, Paris 1970;

\bibitem {car}Cartan, H., Eilenberg, S., Homological Algebra, Princeton Univ.
Press, 1956, (MR\ 17, 1040.)

\bibitem {god}GODBILLON C., G\'{e}om\'{e}trie diff\'{e}rentielle et
M\'{e}canique analytique, Collections M\'{e}thodes, Hermann, Paris 1969;

\bibitem {lic1}LICHNEROWICZ A., Les vari\'{e}t\'{e}s de Jacobi et leurs
alg\`{e}bres de Lie associ\'{e}es, J. Math. pures et appl., 57, 1978, n$%
{{}^\circ}%
$ $4$, p. 453 \`{a} 488;

\bibitem {oka1}OKASSA E., Alg\`{e}bres de Jacobi et Alg\`{e}bres de
Lie-Rinehart-Jacobi, J.of Pure and Applied Algebra, vol. 208, n$%
{{}^\circ}%
3,$ (2007), 1071-1089;

\bibitem {oka2}OKASSA E., On Lie-Rinehart-Jacobi algebras, Journal of Algebra
and its Aplications, Vol. 7, N$%
{{}^\circ}%
6$, (2008), 749-772;

\bibitem {oka3}OKASSA E, Symplectic Lie-Rinehart-Jacobi algebras and contact
manifolds, Canad. Math. Bull. Vol. 54 (4), 2011 pp. 716-725;

\bibitem {rin}RINEHART G., Differential forms for general commutative
algebras, Trans. Amer. Math. Soc. 108 (1963), 195-222;

\bibitem {vai1}VAISMAN I., Locally conformal symplectic manifolds, Internat.
J. Math. \& Math. Sci. Vol.8 n${{}^{\circ}}3$ (1985), 521-536;
\end{thebibliography}
\end{document}